\numberwithin{equation}{section}
\newtheorem{Definition}{Definition}[section]
\newtheorem{theorem}[Definition]{Theorem}
\newtheorem{lemma}[Definition]{Lemma}
\newtheorem{proposition}[Definition]{Proposition}
\newtheorem{corollary}[Definition]{Corollary}
\newtheorem{example}[Definition]{Example}
\begin{document}

\title{\Large \bf On m-quasi-ideals in m-regular ordered semigroups}
\author{Susmita Mallick \\
\footnotesize{Department of Mathematics, Visva-Bharati
University,}\\
\footnotesize{Santiniketan, Bolpur - 731235, West Bengal, India}\\
\footnotesize{mallick.susmita11@gmail.com}}

\date{}
\maketitle

\begin{abstract}
In this paper we characterize left(right) ideals, bi-ideals and
quasi-ideals of an ordered semigroup by an index $m$ and give some
important interplays between these ideals. The concept of
m-regularity of an ordered semigroups has been introduced. Moreover
m-regular ordered semigroups are characterized by their
m-quasi-ideals and the fact that for any m-regular ordered
semigroups $A$, the set $Q_{A}$ of all m-quasi-ideals of $A$, with
multiplication defined by: $Q_{1}\circ Q_{2}=(Q_{1}Q_{2}]$, for all
$Q_{1},Q_{2}\in Q_{A}$, is a m-regular semigroup is obtained here.
\end{abstract}
{\it Keywords and Phrases:} m-left ideal, m-right ideal, m-bi-ideal,
m-quasi-ideal, m-left simple, m-right simple, m-regular.
\\{\it 2010 Mathematics Subject Classification:}  20M10;06F05.

\section{Introduction and preliminaries}
An ordered semigroup is a partially ordered set $(A,\leq)$, and at
the same time a semigroup $(A, \cdot)$ such that for all $a , b ,
x\in A; \;a \leq b$ implies $\;xa\leq xb \;and \;a x \leq b x$. It
is denoted by $(A,\cdot, \leq)$. Throughout this paper we consider
an ordered semigroup $A$ with $\{1\}$. For an ordered semigroup $A$
and $H \subseteq S$, denote $(H]_A := \{t \in A : t \leq  h, \;$for
some $\;h\in H\}.$ Also it is denoted by $(H]$ if there is no scope
of confusion. Let $I$ be a nonempty subset of an ordered semigroup
$A$. $I$ is a left (right) ideal \cite{K1} of $S$, if $AI \subseteq
I \;( IA \subseteq I)$ and $(I]= I$ and $I$ is an ideal of $A$ if it
is both a left and a right ideal of $A$. $A$ is left (right) simple
\cite{H1} if it has no non-trivial proper left (right) ideal. For an
ordered semigroup $A$ and a subsemigroup $S$ of $A$,
$S^{m}=SSSS\cdot\cdot\cdot\cdot S$(m-times) where $m$ is a positive
integer. Its easy to check that for any subsemigroup $S$ of an
ordered semigroup $A$, $S^{n}\subseteq S$ for all positive integer
n. Similarly $S^{r}\subseteq S^{t}$, for all positive integers $r$
and $t$, such that $r\geq t$ but the converse is not true.
\begin{lemma}\cite{kehayopulu2} \label{AB}
Let $T$ be an ordered semigroup and $A$ and $B$ be subsets of $T$.
Then the following statements hold:

\begin{enumerate}
\item \vspace{-.4cm} $A\subseteq (A]$.
\item \vspace{-.4cm} $((A]]=(A]$.
\item \vspace{-.4cm} If $A\subseteq B$ then $(A]\subseteq (B]$.
\item \vspace{-.4cm} $(A\cap B]\subseteq (A]\cap (B]$.
\item \vspace{-.4cm} $(A\cup B]=(A]\cup (B]$.
\item \vspace{-.4cm} $(A](B]\subseteq (AB]$.
\item \vspace{-.4cm} $((A](B]]=(AB]$.
 \end{enumerate}
\end{lemma}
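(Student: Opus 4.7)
The plan is to prove the seven items in the order given, since each later item draws on the earlier ones. The only ingredients available are the three structural facts of an ordered semigroup with order-compatible multiplication: reflexivity of $\leq$, transitivity of $\leq$, and the two compatibility laws $a\leq b\Rightarrow xa\leq xb$ and $ax\leq bx$. No single part is genuinely hard; the work is mostly arranging the definitions.

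For (1)--(3) I would simply unwind the definition of $(H]$. Reflexivity gives (1) at once: any $a\in A$ satisfies $a\leq a$, hence $a\in (A]$. Combining (1) with transitivity of $\leq$ yields (2): $(A]\subseteq ((A]]$ by (1), and conversely any $t\leq s\leq h$ with $h\in A$ gives $t\leq h$. Item (3) is immediate, since any witness $h\in A$ for $t\in (A]$ is also a witness in $B\supseteq A$.

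Items (4) and (5) follow quickly from (3). Since $A\cap B\subseteq A$ and $A\cap B\subseteq B$, two applications of (3) give (4). For (5), (3) applied to $A\subseteq A\cup B$ and $B\subseteq A\cup B$ yields one inclusion; the reverse is a case split on whether a witness $h\in A\cup B$ lies in $A$ or in $B$.

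The semigroup structure enters only in (6): given $x\in (A]$ and $y\in (B]$ with $x\leq a\in A$ and $y\leq b\in B$, the two compatibility laws give $xy\leq ay\leq ab\in AB$, hence $xy\in (AB]$. Finally, (7) is a formal consequence of what is already proved. One inclusion, $((A](B]]\subseteq (AB]$, is (6) followed by (3) and then (2). The reverse, $(AB]\subseteq ((A](B]]$, comes from $AB\subseteq (A](B]$ (which is (1) applied on each factor) together with (3). I expect no real obstacle; the only bookkeeping to keep straight is the chain of three citations in the final step of (7).
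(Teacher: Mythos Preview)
Your proof is correct and complete; each of the seven items is handled by exactly the elementary argument one expects. There is nothing to compare against, however: the paper does not supply a proof of this lemma at all but simply cites it from Kehayopulu and Tsingelis \cite{kehayopulu2}. Your write-up therefore goes beyond what the paper contains.
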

\section{\textbf{m-left/right ideal }}

\begin{Definition}
Let $A$ be an ordered semigroup. A subsemigroup $L$ of $A$ is called
m-left ideal of $A$ if $A^{m}L\subseteq L$ and $(L]=L$, where $m$ is
a positive integer not necessarily $1$.
\end{Definition}
Similarly, a subsemigroup $R$ of $A$ is called m-right ideal of $A$
if $RA^{m}\subseteq R$ and $(R]=R$, where m is a positive integer. A
subsemigroup $I$ of $A$ is called an m-two sided ideal or simply an
m-ideal of $A$ if it is both m-left ideal and m-right ideal of $A$.

\begin{proposition} \label{1}
Let $A$ be an ordered semigroup. Then following assertions hold:
\begin{enumerate}
  \item \vspace{-.4cm}
Every left(right) ideal is an m-left(right)ideal of $A$.
  \item \vspace{-.4cm}
Every m-left(right) ideal is an n-left(right) ideal of $A$, for all
$n\geq m$.
 \item \vspace{-.4cm}
Intersection of m-left ideals of $A$ (If non-empty) is an m-left
ideal of $A$.
 \item \vspace{-.4cm}
Intersection of m-right ideals of $A$ (If non-empty) is an m-right
ideal of $A$.
\end{enumerate}
\end{proposition}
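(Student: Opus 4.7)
The plan is to dispatch the four assertions separately, relying only on Lemma 1.1 and on the general fact recorded in the preliminaries that $A^r\subseteq A^t$ for positive integers $r\geq t$.

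For (1), I assume $L$ is a left ideal, so $AL\subseteq L$ and $(L]=L$. First observe that $L$ is automatically a subsemigroup, since $LL\subseteq AL\subseteq L$. An easy induction on $m$ then yields $A^{m}L\subseteq L$: for $m=1$ this is the defining inclusion, and for $m\geq 2$ one has $A^{m}L=A\cdot A^{m-1}L\subseteq A\cdot L\subseteq L$. Combined with $(L]=L$, this confirms that $L$ is an m-left ideal. The right-ideal case is entirely symmetric.

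For (2), let $L$ be an m-left ideal and $n\geq m$. The key point is that $A^{n}\subseteq A^{m}$ by the preliminary remark, hence $A^{n}L\subseteq A^{m}L\subseteq L$. Since the subsemigroup condition and the downward-closure condition $(L]=L$ are inherited from $L$'s status as an m-left ideal, $L$ is automatically an n-left ideal. The right-ideal version is symmetric.

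For (3), let $\{L_i\}_{i\in I}$ be a family of m-left ideals with $L=\bigcap_{i\in I}L_i\neq\emptyset$, and I verify the three defining conditions in turn. Closure under multiplication: for $a,b\in L$, every $L_i$ contains $ab$, so $ab\in L$. Absorption: from $L\subseteq L_i$ I get $A^{m}L\subseteq A^{m}L_i\subseteq L_i$ for every $i$, whence $A^{m}L\subseteq L$. Downward closure: one inclusion is Lemma 1.1(1), and conversely any $t\in(L]$ satisfies $t\leq x$ for some $x\in L\subseteq L_i$, so $t\in(L_i]=L_i$ for every $i$, giving $t\in L$. Assertion (4) is obtained by the same argument with the roles of left and right swapped. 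The only mild care needed anywhere is in (1), where one should not forget to check that a left ideal in the sense of the preliminaries is in fact a subsemigroup, and in (2), where citing $A^{n}\subseteq A^{m}$ directly avoids the temptation to factor $A^{n}L=A^{n-m}A^{m}L$ and then be stuck needing $A^{n-m}L\subseteq L$, which is not immediately at hand.
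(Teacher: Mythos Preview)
Your proof is correct and follows essentially the same line as the paper's. Two minor points of comparison: in (1) you explicitly verify that a left ideal is a subsemigroup (via $LL\subseteq AL\subseteq L$), which the paper silently assumes; and in (3) the paper's proof actually allows each $L_\lambda$ to be an $m_\lambda$-left ideal with possibly different $m_\lambda$ and then takes $m=\max\{m_\lambda\}$, whereas you treat the fixed-$m$ case as literally stated---both are fine, and yours matches the proposition as written.
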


\begin{proof}

$(1)$: Let $L$ be a left ideal of $A$ then $A^{m}L\subseteq
AL\subseteq L$ and $(L]=L$. Hence $L$ is a m-left ideal of $A$.

Similarly every right ideal of $A$ is an m-right ideal of $A$.

$(2)$: First consider $L$ be an m-left ideal of $A$, then
$A^{m}L\subseteq L$ and $(L]=L$. Now for $n\geq m$; $A^{n}L\subseteq
A^{m}L\subseteq L$ and $(L]=L$. Hence $L$ is an n-left ideal of $A$.

Likewise, every m-right ideal of $A$ is an n-right ideal of $A$.

$(3)$: Let $\{L_{\lambda}: \lambda\in \Lambda \}$ be a family of
m-left ideals of an ordered semigroup $A$. Let $L=\cap_{\lambda\in
\Lambda} L_{\lambda}$ is a subsemigroup of $A$ being intersection of
a family of subsemigroups of $A$. Now we have,
$A^{m_{\lambda}}L_{\lambda}\subseteq L_{\lambda}$, for all
$\lambda\in \Lambda$ and $(L_{\lambda}]=L_{\lambda}$. Now
$L\subseteq L_{\lambda}$, for all $\lambda\in \Lambda$. Let
$m=max\{m_{\lambda}: \lambda\in \Lambda\}$. Hence $A^{m}L\subseteq
A^{m_{\lambda}}L\subseteq A^{m_{\lambda}}L_{\lambda}\subseteq
L_{\lambda}$, for all $\lambda\in \Lambda$. Therefore
$A^{m}L\subseteq \cap_{\lambda\in \Lambda}L_{\lambda}=L$ implies
$A^{m}L\subseteq L$. Now $(L]=(\cap_{\lambda\in
\Lambda}L_{\lambda}]\subseteq \cap_{\lambda\in \Lambda}
(L_{\lambda}]=\cap_{\lambda\in \Lambda} L_{\lambda}=L$ implies
$(L]=L$. Hence $L$ is a m-left ideal of $A$.

$(4)$:

Analogously.
\end{proof}

The converse of the above statement is incorrect. This is verified
by the examples.

\begin{example}\label{example1}
Let $A=\{x,y,z,w\}$ be an ordered semigroup with the multiplication
'$\cdot$' and the order relation defined by

\begin{tabular}{|c|c|c|c|c|c|}
  \hline

  . & x & y & z & w  \\ \hline
  x & w & z & w & w \\ \hline
  y & z & w & w & w  \\ \hline
  z & w & w & w & w  \\ \hline
  w & w & w & w & w \\

  \hline

\end{tabular}\\

$\leq =\{(x,x),(y,y),(z,z),(w,x),(x,y),(x,z),(w,w)\}$.

Let $L=\{x,w\}$. For integer $m>1$, we obtain that $L$ is an m-left
ideal ,m-right ideal of $A$ but not a left(right) ideal of $A$.

\end{example}

Let $K$ be a subsemigroup of an ordered semigroup $A$ and
$\mathcal{L}=\{L: L \ is \ an\ m-left\ ideal\ of\ A$ containing
$K\}$. Therefore $\mathcal{L}$ is a non-empty because $A\in
\mathcal{L}$. Let $(K)_{m-l}=\cap_{L\in \mathcal{L}}L$. Hence by
Theorem \ref{1} $(K)_{m-l}$ is an m-left ideal of an ordered
semigroup $A$. Moreover we can easily check that $(K)_{m-l}$ is the
smallest m-left ideal of $A$ containing $K$. The m-left ideal
$(K)_{m-l}$ is called principal m-left ideal of $A$ generated by
$K$.

\begin{theorem}\label{Principal left ideal}
Let $K$ be a subsemigroup of an ordered semigroup $A$. Then
\begin{enumerate}
 \item \vspace{-.4cm}
m-left ideal generated by $K$ is defined by $(K)_{m-l}=(K\cup
A^{m}K]$.
  \item \vspace{-.4cm}
m-right ideal generated by $K$ is defined by $(K)_{m-r}=(K\cup
KA^{m}]$.
\end{enumerate}
\end{theorem}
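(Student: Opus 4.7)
The plan is to set $H := (K \cup A^m K]$ and prove two things: (i) $H$ is an m-left ideal of $A$ containing $K$, and (ii) $H$ is contained in every m-left ideal of $A$ containing $K$. Then minimality forces $H = (K)_{m-l}$. The right-ideal statement will be completely symmetric, so I would handle (1) in detail and then just remark that (2) follows by a dual argument.

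For (i), containment of $K$ in $H$ is immediate from Lemma \ref{AB}(1). To check that $H$ is a subsemigroup, I would take $x,y \in H$ with $x \le a$, $y \le b$ for some $a,b \in K \cup A^m K$, note $xy \le ab$, and split into the four cases on where $a$ and $b$ lie. The cases all collapse to $ab \in K \cup A^m K$ using two easy facts: $K$ is a subsemigroup (so $K\cdot K \subseteq K$), and $A^{r}\subseteq A^{t}$ whenever $r \geq t$ (stated in the preliminaries), which for example turns $A^m K\cdot A^m K \subseteq A^{2m+1}K$ into a subset of $A^m K$. Next, to see $A^m H \subseteq H$, I would pick $u = a_1\cdots a_m y \in A^m H$ with $y \le z \in K\cup A^m K$, so $u \le a_1\cdots a_m z$, and observe
\[
A^m(K \cup A^m K) \;=\; A^m K \,\cup\, A^{2m}K \;\subseteq\; A^m K \;\subseteq\; K\cup A^m K,
\]
giving $u \in H$. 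Finally $(H] = H$ is just Lemma \ref{AB}(2).

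For (ii), let $L$ be any m-left ideal with $K\subseteq L$. Then $A^m K \subseteq A^m L \subseteq L$, so $K\cup A^m K \subseteq L$, and applying Lemma \ref{AB}(3) together with $(L]=L$ gives $H = (K\cup A^m K] \subseteq (L] = L$. Combining with (i), $H$ is the smallest m-left ideal containing $K$, i.e.\ $(K)_{m-l} = (K \cup A^m K]$. The m-right ideal identity $(K)_{m-r} = (K\cup K A^m]$ is obtained by exactly the same argument with the sides reversed; the only substantive change is that in the subsemigroup check one uses $KA^m \cdot KA^m \subseteq K A^{2m+1} \subseteq K A^m$ in place of its left-hand analogue.

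The main obstacle is the subsemigroup verification in (i), specifically the mixed case $a \in K$, $b \in A^m K$ (and its transpose), where one must use both $K \subseteq A$ and the absorption $A^{r}\subseteq A^{t}$ for $r \ge t$ to push the product back into $A^m K$. Everything else is essentially a direct application of Lemma \ref{AB}.
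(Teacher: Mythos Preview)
Your proposal is correct and follows essentially the same route as the paper: set $H=(K\cup A^{m}K]$, verify it is a subsemigroup by expanding the product into the four pieces $KK$, $KA^{m}K$, $A^{m}KK$, $A^{m}KA^{m}K$ and absorbing each into $K\cup A^{m}K$ via $A^{r}\subseteq A^{t}$ for $r\geq t$, then check $A^{m}H\subseteq H$ and minimality exactly as you describe. The only cosmetic difference is that the paper carries out the subsemigroup computation at the level of sets using Lemma~\ref{AB}(6),(7) (i.e.\ $(K\cup A^{m}K](K\cup A^{m}K]\subseteq(KK\cup KA^{m}K\cup A^{m}KK\cup A^{m}KA^{m}K]$) rather than picking representatives $x\leq a$, $y\leq b$, but the case analysis and the use of the absorption $A^{m+1},A^{2m+1}\subseteq A^{m}$ are identical.
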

\begin{proof}
$(1)$: Suppose $(K)_{m-l}=(K\cup A^{m}K]$. We must explain that
$(K)_{m-l}$ is the minimal m-left ideal of $A$ which contains $K$.
Now $(K\cup A^{m}K](K\cup A^{m}K]\subseteq (KK\cup KA^{m}K\cup
A^{m}KK\cup A^{m}KA^{m}K]=(KK]\cup (KA^{m}K]\cup (A^{m}KK]\cup
(A^{m}KA^{m}K]\subseteq (K]\cup (AA^{m}K]\cup (A^{m}AK]\cup
(A^{m}AA^{m}K]\subseteq (K]\cup (A^{m+1}K]\cup (A^{m+1}K]\cup
(A^{2m+1}K]\subseteq (K]\cup (A^{m}K]=(K\cup A^{m}K]$, using Lemma
\ref{AB}. Hence $(K)_{m-l}$ is a subsemigroup of $A$. Next we have
to show that $A^{m}(K)_{m-l}\subseteq (K)_{m-l}$. Suppose
$A^{m}(K)_{m-l}=A^{m}(K\cup A^{m}K]\subseteq (A^{m}K\cup
A^{2m}K]\subseteq (A^{m}K]\subseteq (K\cup A^{m}K]$. Hence $(K\cup
A^{m}K]$ is an m-left ideal of $A$. Now we need to show that
$(K)_{m-l}$ is the minimal m-left ideal of $A$ containing $K$.
Consider $K'$ be any m-left ideal of $A$ containing $K$. Now $(K\cup
A^{m}K]\subseteq (K'\cup A^{m}K']\subseteq (K'\cup K']=(K']=K'$.
Evidently $(K)_{m-l}\subseteq K'$. Hence $(K)_{m-l}$ is the minimal
m-left ideal containing $K$.

$(2)$: Similar As previous.

\end{proof}

\begin{corollary}
Let $A$ be an ordered semigroup. If $x\in A$, the m-left ideal
generated by $x$ denoted by $(x)_{m-l}$ and defined by
$(x)_{m-l}=(x\cup A^{m}x]$.
\end{corollary}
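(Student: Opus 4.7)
The final statement is a direct specialization of Theorem~\ref{Principal left ideal} with the role of the subsemigroup $K$ taken by the singleton $\{x\}$: substituting into $(K)_{m-l}=(K\cup A^{m}K]$ gives $(x)_{m-l}=(x\cup A^{m}x]$. The plan is therefore to cite the theorem, pointing out the small checks needed to justify the substitution.

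The one detail that is not literally covered by Theorem~\ref{Principal left ideal} is that $\{x\}$ is not automatically a subsemigroup of $A$. I would handle this by inspecting the proof of Theorem~\ref{Principal left ideal} and verifying that each step remains valid with $K=\{x\}$. The absorption $A^{m}(x\cup A^{m}x]\subseteq(x\cup A^{m}x]$, the idempotency $((x\cup A^{m}x]]=(x\cup A^{m}x]$, and the minimality argument transcribe verbatim, using Lemma~\ref{AB}(2), (6), (7). The subsemigroup verification $(x\cup A^{m}x](x\cup A^{m}x]\subseteq(x\cup A^{m}x]$ expands into four pieces, and the only one requiring attention beyond Lemma~\ref{AB} is $(xx]$; but under the standing hypothesis that $A$ contains $1$, we have $x\in A^{m}$, so $x^{2}\in A^{m}x$ and hence $(xx]\subseteq(A^{m}x]\subseteq(x\cup A^{m}x]$, while the three remaining pieces reduce exactly as in the proof of the theorem.

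The main obstacle, if any, is precisely this subsemigroup closure of $(x\cup A^{m}x]$, since it is the one place the singleton nature of $\{x\}$ intervenes and where the identity in $A$ is used. With that step in hand, minimality follows by copying the theorem's argument: if $L$ is an m-left ideal with $x\in L$, then $A^{m}x\subseteq A^{m}L\subseteq L$ and $(L]=L$ together give $(x\cup A^{m}x]\subseteq L$, completing the proof.
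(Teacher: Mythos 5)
Your proposal is correct and follows the same route the paper intends: the corollary is stated without proof as an immediate specialization of Theorem~\ref{Principal left ideal} to $K=\{x\}$. Your extra check that $(x\cup A^{m}x]$ is closed under multiplication even though $\{x\}$ need not be a subsemigroup (using $1\in A$ to get $x\in A^{m}$, hence $x^{2}\in A^{m}x$) addresses a genuine hypothesis mismatch that the paper silently glosses over, and is exactly the right way to justify the substitution.
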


\begin{Definition}
Let $A$ be an ordered semigroup. An m-left ideal of $A$ is called
principal m-left ideal of $A$ if it is generated by a single element
of $A$.
\end{Definition}

\begin{theorem} In an ordered semigroup $A$, the following hold:
\begin{enumerate}
  \item \vspace{-.4cm}
For a subsemigroup $K$ of $A$, $(K)_{m-l}\subseteq (K)_{l}$.
  \item \vspace{-.4cm}
For any element $a\in A$, $(a)_{m-l}\subseteq (a)_{l}$.

\end{enumerate}
\end{theorem}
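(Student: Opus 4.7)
The plan is to exploit the universal/minimality property built into the construction of $(K)_{m-l}$ rather than to compute anything explicitly. Recall from the discussion preceding Theorem \ref{Principal left ideal} that $(K)_{m-l}$ was defined as the intersection $\bigcap_{L\in\mathcal{L}} L$, where $\mathcal{L}$ is the family of all m-left ideals of $A$ containing $K$, and was observed there to be the smallest m-left ideal of $A$ containing $K$. So to prove $(K)_{m-l}\subseteq (K)_l$ it suffices to produce $(K)_l$ as a member of the family $\mathcal{L}$.

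For part (1), I would argue as follows. The ordinary principal left ideal $(K)_l$ is, by definition, a left ideal of $A$ containing $K$. By Proposition \ref{1}(1), every left ideal of $A$ is automatically an m-left ideal of $A$. Hence $(K)_l$ is an m-left ideal of $A$ containing $K$, i.e.\ $(K)_l\in\mathcal{L}$, and therefore the minimality of $(K)_{m-l}$ gives $(K)_{m-l}\subseteq (K)_l$ at once.

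Part (2) is just the element-version of the same reasoning. The principal left ideal $(a)_l$ generated by $a\in A$ is a left ideal that contains $a$; Proposition \ref{1}(1) again upgrades it to an m-left ideal of $A$ containing $a$; and since $(a)_{m-l}$ (as described by the preceding corollary) is the smallest such m-left ideal, we conclude $(a)_{m-l}\subseteq (a)_l$.

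If a reader prefers a direct computational route, one can instead use the formula $(K)_{m-l}=(K\cup A^m K]$ from Theorem \ref{Principal left ideal}, compare it with the analogous description $(K)_l=(K\cup AK]$ for the ordinary case, and observe that $A^m\subseteq A$ forces $A^m K\subseteq AK$, so that Lemma \ref{AB}(3) yields the containment. I expect no genuine obstacle in either route: the substantive work was already done in Proposition \ref{1}(1), and the present theorem is essentially a one-line consequence of it once the minimality viewpoint is in place. The only point worth handling carefully is making sure to cite Proposition \ref{1}(1) explicitly when promoting the left ideal to an m-left ideal, since that is what ties the two notions together.
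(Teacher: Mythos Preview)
Your proposal is correct. The paper, however, takes precisely the computational route you mention only as an afterthought: it simply notes that $A^{m}K\subseteq AK$, whence $(K\cup A^{m}K]\subseteq (K\cup AK]$, and concludes via the explicit formulas of Theorem~\ref{Principal left ideal}. Your primary argument via the minimality characterization of $(K)_{m-l}$ together with Proposition~\ref{1}(1) is a genuinely different (and arguably cleaner) route, since it avoids any direct manipulation of the generating sets and makes transparent that the result is really just a reformulation of ``every left ideal is an m-left ideal''. The paper's approach has the minor advantage of being entirely self-contained once the formulas are in hand, while yours better isolates the conceptual content; either way the result is immediate.
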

\begin{proof}
$(1)$: Since for any positive integer $m$, $(K\cup A^{m}K]\subseteq
(K\cup AK]$. Hence $(K)_{m-l}\subseteq (K)_{l}$.

$(2)$: Analogously.

\end{proof}

\begin{Definition}
Let m be non-negative integer. An ordered semigroup $(A,\cdot,\leq)$
is said to be m-left-simple(m-right-simple) if it does not contain
any  proper non trivial m-left(m-right) ideal.
\end{Definition}

\begin{lemma}
Let $A$ be an ordered semigroup and $m$ be any non-negative integer.
The following statements hold:

\begin{enumerate}
  \item \vspace{-.4cm}
$A$ is m-left-simple if and only if $A=(A^{m}x]$ for all $x\in A$.
  \item \vspace{-.4cm}
$A$ is m-right-simple if and only if $A=(xA^{m}]$ for all $x\in A$.

\end{enumerate}

\end{lemma}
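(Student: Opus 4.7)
The plan is to prove the two directions separately, handling statement (1) in detail; statement (2) will follow by a symmetric argument.

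For the forward direction of (1), assume $A$ is m-left-simple and fix $x\in A$. My first task is to exhibit $(A^{m}x]$ as a nonempty m-left ideal of $A$. Non-emptiness follows from the presence of the identity: $x = 1\cdot 1\cdots 1\cdot x \in A^{m}x \subseteq (A^{m}x]$. For the subsemigroup property, I would apply Lemma~\ref{AB}(6) and (3) to get $(A^{m}x](A^{m}x]\subseteq (A^{m}xA^{m}x]$, and then observe that $A^{m}xA^{m}x\subseteq A^{2m+1}x\subseteq A^{m}x$, since $A^{2m+1}\subseteq A^{m}$ by the remark in the preliminaries. A parallel computation, $A^{m}(A^{m}x]\subseteq (A^{m})(A^{m}x]\subseteq (A^{2m}x]\subseteq (A^{m}x]$, delivers the m-left absorption property, while Lemma~\ref{AB}(2) gives $((A^{m}x]]=(A^{m}x]$. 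With $(A^{m}x]$ a nonempty m-left ideal of the m-left-simple semigroup $A$, we conclude $(A^{m}x]=A$.

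For the converse, assume $A=(A^{m}x]$ for every $x\in A$, and let $L$ be any nonempty m-left ideal of $A$. Choosing $x\in L$, I would chain
\[
A \;=\; (A^{m}x] \;\subseteq\; (A^{m}L] \;\subseteq\; (L] \;=\; L,
\]
using $x\in L$, the defining inclusion $A^{m}L\subseteq L$, and $(L]=L$. Hence $L=A$, and $A$ has no proper non-trivial m-left ideal.

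Statement (2) would be proved by reflecting the argument on the right: $(xA^{m}]$ is shown to be a nonempty m-right ideal using the same closure-operator manipulations, and the converse uses $A=(xA^{m}]\subseteq (LA^{m}]\subseteq (L]=L$ for $x\in L$. The only place that requires care is the subsemigroup verification for $(A^{m}x]$, which I expect to be the main (modest) obstacle since it is where Lemma~\ref{AB} has to be invoked a few times in succession together with the inclusion $A^{r}\subseteq A^{t}$ for $r\geq t$; everything else reduces to a direct appeal to the definitions.
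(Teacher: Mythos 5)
Your proposal is correct and follows essentially the same route as the paper: both directions hinge on showing $(A^{m}x]$ is an m-left ideal (via $A^{m}(A^{m}x]\subseteq (A^{2m}x]\subseteq (A^{m}x]$) and, conversely, on the chain $A=(A^{m}x]\subseteq (A^{m}L]\subseteq (L]=L$ for $x\in L$. The only difference is that you explicitly verify non-emptiness and the subsemigroup property of $(A^{m}x]$, details the paper's proof leaves implicit.
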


\begin{proof}
$(1)$: Assume that $A$ is m-left simple and let $x\in A$. Now
$A^{m}(A^{m}x]\subseteq (A^{2m}x]\subseteq (A^{m}x]$. Hence
$(A^{m}x]$ is an m-left ideal of $A$. Hence $A=(A^{m}x]$, by
assumption.

Conversely, assume that $A=(A^{m}x]$ for all $x\in A$. Let $K$ be
any m-left ideal of $A$ then $A^{m}K\subseteq K$ and $(K]=K$. Let
$x\in K\subseteq A$ then $A=(A^{m}x]\subseteq (A^{m}K]\subseteq
(K]=K$. Hence $A$ is m-left simple.

$(2)$: This can be proved similarly.

\end{proof}

\begin{corollary}
Let $A$ be an ordered semigroup. The following statements hold for
an ordered semigroup:

\begin{enumerate}
  \item \vspace{-.4cm}
$A$ is left-simple if and only if $A=(As]$ for all $s\in A$.
  \item \vspace{-.4cm}
$A$ is right-simple if and only if $A=(sA]$ for all $s\in A$.

\end{enumerate}

\end{corollary}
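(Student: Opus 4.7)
The plan is to observe that this corollary is nothing more than the $m=1$ instance of the preceding lemma, so the real task is to check that the two definitions (left ideal vs.\ $1$-left ideal, and left-simple vs.\ $1$-left-simple) agree in that instance.

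First I would verify that a $1$-left ideal of $A$ is the same thing as a left ideal of $A$ in the sense of the introduction. The paper's definition of an m-left ideal requires a subsemigroup $L$ with $A^{m}L\subseteq L$ and $(L]=L$; taking $m=1$, the hypothesis $AL\subseteq L$ automatically forces $LL\subseteq AL\subseteq L$, so the subsemigroup clause is redundant and one recovers precisely the earlier definition of a left ideal. Consequently, ``$A$ is $1$-left-simple'' means exactly ``$A$ is left-simple'', and the analogous remark handles the right-hand case.

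With the definitions identified, both biconditionals follow immediately by specializing the previous lemma to $m=1$ and using $A^{1}=A$, which turns $A=(A^{m}x]$ into $A=(Ax]$ and $A=(xA^{m}]$ into $A=(xA]$. Should a reader prefer a self-contained argument, the two-line proof of the preceding lemma transcribes verbatim: $(As]$ is a left ideal because $A(As]\subseteq (A^{2}s]\subseteq (As]$ (by Lemma \ref{AB}), so left-simplicity gives $(As]=A$; conversely, if $A=(As]$ for every $s$ and $K$ is any left ideal, then picking any $s\in K$ yields $A=(As]\subseteq (AK]\subseteq (K]=K$. No step here is substantive; the only care needed is in matching up the simplicity conventions to see that the $m=1$ reduction is genuinely legitimate.
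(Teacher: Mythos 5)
Your proposal is correct and matches the paper's (implicit) argument: the corollary is stated without proof precisely because it is the $m=1$ specialization of the preceding lemma, which is exactly the reduction you carry out. Your added check that a $1$-left ideal coincides with a left ideal (the subsemigroup clause being redundant once $AL\subseteq L$) is a worthwhile detail the paper leaves unstated, and your self-contained transcription of the lemma's proof is also sound.
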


\section{\textbf{m-bi-ideal  }}
\begin{Definition}
Let $A$ be an ordered semigroup and $B$ be subsemigroup of $A$, then
$B$ is called m-bi-ideal of $A$ if $BA^{m}B\subseteq B$ and $(B]=B$,
where $m\geq 1$ is a positive integer, called bipotency of bi-ideal
$B$.
\end{Definition}

\begin{theorem}
Every bi-ideal in an ordered semigroup $A$ is m-bi-ideal for any
$m\geq 1$.

\end{theorem}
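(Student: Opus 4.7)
The plan is to reduce the m-bi-ideal condition directly to the ordinary bi-ideal condition by exploiting that $A$ is itself a subsemigroup of $A$, so that $A^m \subseteq A$ for every positive integer $m$. This inclusion was recorded in the preliminaries (the remark that $S^n \subseteq S$ for any subsemigroup $S$ of $A$ and any positive integer $n$, applied with $S = A$). Once this inclusion is in hand, the nontrivial half of the m-bi-ideal axiom follows at once from monotonicity of the set-theoretic product.

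Concretely, I would argue as follows. Let $B$ be a bi-ideal of $A$ and fix $m \geq 1$. By definition a bi-ideal is a subsemigroup with $BAB \subseteq B$ and $(B] = B$, so the subsemigroup requirement and the closure requirement $(B] = B$ in the definition of an m-bi-ideal already hold. It therefore suffices to verify the single inclusion $BA^m B \subseteq B$. Using $A^m \subseteq A$ I obtain $BA^m B \subseteq BAB$, and then the bi-ideal hypothesis gives $BAB \subseteq B$, which completes the verification and shows that $B$ has bipotency $m$.

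There is essentially no obstacle here; the statement is a one-line consequence of the preliminary inclusion $A^m \subseteq A$ combined with the defining property of a bi-ideal. The only point worth underlining is that the argument is uniform in $m$, so the same reasoning simultaneously covers every $m \geq 1$, with the case $m = 1$ recovering the original bi-ideal condition as a consistency check.
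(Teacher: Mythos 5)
Your proof is correct and follows essentially the same route as the paper: both reduce the claim to the inclusion $BA^{m}B\subseteq BAB\subseteq B$. In fact your version is slightly more complete, since the paper only observes that $BAB=BA^{1}B\subseteq B$ and leaves the monotonicity step $A^{m}\subseteq A$ implicit, whereas you state it explicitly.
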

\begin{proof}
Let $B$ be a bi-ideal of $A$, then by definition $BAB\subseteq B$
that is $BA^{1}B\subseteq B$ and $(B]=B$ which implies $B$ is an
m-bi-ideal of $A$.

\end{proof}
 But the converse is not true.

\begin{example}
Consider  an ordered semigroup $A=\{x,y,z,w\}$ with the
multiplication '$\cdot$' and the order relation defined by

\begin{tabular}{|c|c|c|c|c|c|}
  \hline

  . & x & y & z & w  \\ \hline
  x & x & x & x & x \\ \hline
  y & x & x & x & z  \\ \hline
  z & x & x & x & y  \\ \hline
  w & x & z & y & x \\

  \hline

\end{tabular}\\

$\leq =\{(x,x),(x,y),(y,y),(z,z),(w,w)\}$.

Then $\{x,w\}$ is an m-bi-ideal of $A$ for $m>1$ but not a bi-ideal
of $A$.

\end{example}

\begin{proposition}
Let $B_{1}, B_{2}$ be two m-bi-ideals with bipotencies $m_{1},m_{2}$
respectively of an ordered semigroup $A$. The product of any
$m_{1}$-bi-ideal and $m_{2}$-bi-ideal of $A$ is a
max$(m_{1},m_{2})$-bi-ideal of $S$. The product defined by
$B_{1}\ast B_{2}=(B_{1}B_{2}]$.
\end{proposition}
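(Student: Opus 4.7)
The plan is to set $m=\max(m_{1},m_{2})$ and verify the three defining conditions of an $m$-bi-ideal for $(B_{1}B_{2}]$: downward closure, the subsemigroup property, and the inclusion $(B_{1}B_{2}]A^{m}(B_{1}B_{2}]\subseteq (B_{1}B_{2}]$. Downward closure $((B_{1}B_{2}]]=(B_{1}B_{2}]$ is immediate from Lemma \ref{AB}(2), so I would dispose of it in one line.

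For the $m$-bi-ideal inclusion, I would first push the order operator outside by applying Lemma \ref{AB}(1) to absorb $A^{m}$ into $(A^{m}]$ and then Lemma \ref{AB}(6) twice, producing
\[
(B_{1}B_{2}]\,A^{m}\,(B_{1}B_{2}]\subseteq (B_{1}B_{2}\,A^{m}\,B_{1}B_{2}].
\]
The essential combinatorial step is to show $B_{1}B_{2}A^{m}B_{1}B_{2}\subseteq B_{1}B_{2}$. I would regroup as $B_{1}\cdot(B_{2}A^{m}B_{1}B_{2})$ and use the nesting $A^{r}\subseteq A^{t}$ for $r\ge t$ from the preliminaries: since $A^{m}B_{1}\subseteq A^{m+1}\subseteq A^{m_{2}}$ (because $m+1>m_{2}$), we obtain $B_{2}A^{m}B_{1}B_{2}\subseteq B_{2}A^{m_{2}}B_{2}\subseteq B_{2}$ by the $m_{2}$-bi-ideal property of $B_{2}$. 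Hence $B_{1}B_{2}A^{m}B_{1}B_{2}\subseteq B_{1}B_{2}$, and the inclusion follows. A symmetric grouping $(B_{1}B_{2}A^{m}B_{1})\cdot B_{2}$, applying the $m_{1}$-bi-ideal property of $B_{1}$ together with $B_{2}A^{m}\subseteq A^{m+1}\subseteq A^{m_{1}}$, would work equally well.

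For the subsemigroup condition, Lemma \ref{AB}(6) again reduces the task to showing $B_{1}B_{2}B_{1}B_{2}\subseteq (B_{1}B_{2}]$. I would factor as $B_{1}\cdot (B_{2}B_{1})\cdot B_{2}$, bound the middle factor by an appropriate power of $A$, and then invoke a bi-ideal condition: schematically, $B_{1}B_{2}B_{1}B_{2}\subseteq B_{1}A^{m_{1}}B_{1}B_{2}\subseteq B_{1}B_{2}$.

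I expect this last step to be the main obstacle. A priori one only has $B_{2}B_{1}\subseteq A^{2}$, which does not feed into the $m_{1}$-bi-ideal inclusion $B_{1}A^{m_{1}}B_{1}\subseteq B_{1}$ when $m_{1}>2$, because the nesting $A^{r}\subseteq A^{t}$ runs in the wrong direction. The resolution presumably uses the standing convention, announced in the preliminaries, that the ordered semigroup $A$ is taken together with $\{1\}$; this collapses all powers $A^{k}$ to $A$ and supplies $B_{2}B_{1}\subseteq A=A^{m_{1}}$, which is exactly what the $m_{1}$-bi-ideal property of $B_{1}$ needs. If instead no such convention is available, one would have to absorb the discrepancy through the order, for instance by finding for each $b_{1}b_{2}b_{1}'b_{2}'$ an element of $B_{1}B_{2}$ dominating it — a step that in general calls on additional structure of $A$.
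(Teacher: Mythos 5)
Your proposal is correct and follows essentially the same route as the paper: reduce everything to set-level inclusions via Lemma \ref{AB}, absorb one inner factor into a power of $A$, use the nesting of powers to reach the right exponent, and finish with one of the two bi-ideal conditions (you invoke $B_{2}A^{m_2}B_{2}\subseteq B_{2}$ where the paper groups the other way to use $B_{1}A^{m_1}B_{1}\subseteq B_{1}$, but you note this symmetric variant yourself). You also correctly identify that the subsemigroup step hinges on the standing convention that $1\in A$ (so that $A\subseteq A^{m_1}$), which is exactly the device the paper uses when it inserts the factors $\cdot 1\cdot 1\cdots 1\cdot$.
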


\begin{proof}
Let $B_{1}$ and $B_{2}$ be two m-bi-ideals of an ordered semigroup
$A$ with bipotencies $m_{1},m_{2}$ respectively then
$B_{1}A^{m_{1}}B_{1}\subseteq B_{1}$, $(B_{1}]=B_{1}$ and
$B_{2}A^{m_{2}}B_{2}\subseteq B_{2}$, $(B_{2}]=B_{2}$.
$(B_{1}B_{2}]$ is obviously a non-empty subset of $A$. Now
$(B_{1}B_{2}]^{2}\subseteq (B_{1}B_{2}](B_{1}B_{2}]\subseteq
(B_{1}B_{2}B_{1}B_{2}]\subseteq (B_{1}AB_{1}B_{2}]\subseteq
(B_{1}A\cdot1\cdot1\cdot1\cdot\cdot\cdot1B_{1}B_{2}]\subseteq
(B_{1}A^{m}B_{1}B_{2}]\subseteq (B_{1}B_{2}]$ thus $(B_{1}B_{2}]$ is
a subsemigroup of $A$. Now we have
$(B_{1}B_{2}]A^{max\{m_{1},m_{2}\}}(B_{1}B_{2}]\subseteq
(B_{1}B_{2}A^{max\{m_{1},m_{2}\}}(B_{1}B_{2}]\subseteq
(B_{1}A^{1+max\{m_{1},m_{2}\}}(B_{1}B_{2}]\subseteq
(B_{1}A^{m_{1}}B_{1}B_{2}]\subseteq (B_{1}B_{2}]$, using
\cite{Munir1}. Again $((B_{1}B_{2}]]=(B_{1}B_{2}]$. Hence $B_{1}\ast
B_{2}$ is an m-bi-ideal with bipotency max$\{m_{1},m_{2}\}$.

\end{proof}

\begin{proposition}
Let $A$ be an ordered semigroup and $X$ be an arbitrary subset of
$A$ and $B$ be an m-bi-ideal of $A$, $m$ is not necessarily $1$.
Then the $(BX]$ is also an m-bi-ideal of $A$.
\end{proposition}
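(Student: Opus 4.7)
To show that $(BX]$ is an m-bi-ideal of $A$, I must verify three things: (i) $(BX]$ is a subsemigroup of $A$; (ii) the absorption $(BX]A^{m}(BX]\subseteq (BX]$; and (iii) $((BX]]=(BX]$. Item (iii) is immediate from Lemma~\ref{AB}(2), so the real work lies in (i) and (ii). Both will rest on two preliminary facts already exploited in the preceding proposition: since $1\in A$, any $a\in A$ equals $a\cdot 1^{m-1}\in A^{m}$, so in particular $X\subseteq A\subseteq A^{m}$; and by the paper's own preliminary remark, $A^{m+1}\subseteq A^{m}$.

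For the subsemigroup condition I would apply Lemma~\ref{AB}(6),(7) to obtain $(BX](BX]\subseteq (BXBX]$, replace the inner $X$ via $X\subseteq A^{m}$ to get $BXBX\subseteq BA^{m}BX$, and then collapse using the hypothesis $BA^{m}B\subseteq B$ to land in $BX$. The chain of down-closures (Lemma~\ref{AB}(2),(3)) brings this back inside $(BX]$. For the absorption condition, Lemma~\ref{AB}(1),(6),(7) gives $(BX]A^{m}(BX]\subseteq (BXA^{m}BX]$; this time I would absorb the middle $A^{m}$ into the preceding $X$ via $XA^{m}\subseteq A\cdot A^{m}\subseteq A^{m+1}\subseteq A^{m}$, so that $BXA^{m}BX\subseteq BA^{m}BX\subseteq BX$, and down-closure again finishes the job.

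\textbf{Main obstacle.} There is no substantive obstacle; the only real choice in each chain is \emph{where} to absorb the free power of $A$ so that the hypothesis $BA^{m}B\subseteq B$ applies with exactly one copy of $BX$ surviving on the outside. Conceptually this is the same bookkeeping pattern used in the previous proposition, carrying an extra factor of $X$ throughout, together with the repeated use of Lemma~\ref{AB} to slide the interval brackets past products.
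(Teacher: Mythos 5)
Your proposal is correct and follows essentially the same route as the paper: both arguments reduce $(BX](BX]$ and $(BX]A^{m}(BX]$ to $(BA^{m}BX]$ by using the identity element to pass from $A$ (or $X\subseteq A$) up to $A^{m}$, respectively $A^{m+1}\subseteq A^{m}$, and then apply $BA^{m}B\subseteq B$. The only cosmetic difference is that the paper pads with explicit factors of $1$ where you invoke $X\subseteq A\subseteq A^{m}$ directly.
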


\begin{proof}
Clearly $(BX]$ is a non-empty subset of $A$.Now $(BX]^{2}\subseteq
(BX](BX]\subseteq (BXBX]\subseteq (BABX]\subseteq
(BA\cdot1\cdot1\cdot\cdot\cdot1BX]\subseteq (BA^{m}BX]\subseteq
(BX]$. Now $(BX]A^{m}(BX]\subseteq (BXA^{m}BX]\subseteq
(BAA^{m}BX]\subseteq (BA^{1+m}BX]\subseteq (BA^{m}BX]\subseteq
(BX]$. Hence $(BX]$ is an m-bi-ideal of $A$ with bipotency $m$.

\end{proof}

Similarly we can show that $(XB]$ is an m-bi-ideal of $A$.

\begin{theorem}

The intersection of a family of m-bi-ideals of an ordered semigroup
$A$ with bipotencies $t_{1},t_{2},......$ is also an m-bi-ideal with
bipotency max$\{t_{1},t_{2},.....\}$. (Provided the intersection is
non-empty.)

\end{theorem}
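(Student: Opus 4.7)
The plan is to imitate the structure of Proposition \ref{1}(3) (intersection of m-left ideals), adapted to the bi-ideal condition $BA^{m}B \subseteq B$. Let $\{B_\lambda : \lambda \in \Lambda\}$ be the given family with bipotencies $\{t_\lambda\}$, set $t = \max\{t_\lambda : \lambda \in \Lambda\}$ (so the statement implicitly needs this max to exist), and write $B = \bigcap_{\lambda \in \Lambda} B_\lambda$, which is non-empty by hypothesis. I must verify three things about $B$: that it is a subsemigroup, that $BA^{t}B \subseteq B$, and that $(B] = B$.

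First I would observe that an intersection of subsemigroups is a subsemigroup, so $B$ is closed under multiplication. Next, for the bi-ideal condition, the key remark is the monotonicity $A^{t} \subseteq A^{t_\lambda}$ for every $\lambda$, which follows from the observation in the preliminaries that $S^{r} \subseteq S^{t}$ whenever $r \geq t$. Fixing any $\lambda \in \Lambda$, since $B \subseteq B_\lambda$ I get
\[
BA^{t}B \subseteq B_\lambda A^{t_\lambda} B_\lambda \subseteq B_\lambda,
\]
using that $B_\lambda$ is a $t_\lambda$-bi-ideal. Because this holds for every $\lambda$, intersecting over $\Lambda$ gives $BA^{t}B \subseteq \bigcap_\lambda B_\lambda = B$.

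Finally, for the downward-closure condition, I would invoke Lemma \ref{AB}(4) (extended routinely to arbitrary intersections):
\[
(B] = \Bigl(\bigcap_{\lambda \in \Lambda} B_\lambda \Bigr] \subseteq \bigcap_{\lambda \in \Lambda} (B_\lambda] = \bigcap_{\lambda \in \Lambda} B_\lambda = B,
\]
while the reverse inclusion $B \subseteq (B]$ is Lemma \ref{AB}(1). Combining the three items shows that $B$ is an m-bi-ideal with bipotency $t = \max\{t_\lambda\}$.

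I do not expect any real obstacle; the argument is almost entirely a bookkeeping exercise. The only delicate point worth flagging is the existence of $\max\{t_\lambda\}$: for a finite family this is automatic, and for an infinite family one needs the set of bipotencies to be bounded above. I would either state that assumption explicitly or note that the family is implicitly assumed finite, as suggested by the enumerated notation $t_1, t_2, \ldots$ in the theorem's statement.
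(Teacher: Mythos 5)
Your proof is correct and follows essentially the same route as the paper's: intersection of subsemigroups, the containment $BA^{t}B\subseteq B_{\lambda}A^{t_{\lambda}}B_{\lambda}\subseteq B_{\lambda}$ via the monotonicity $A^{t}\subseteq A^{t_{\lambda}}$, and Lemma \ref{AB} for $(B]=B$. Your remark that the maximum of the bipotencies must exist (automatic for finite families, an extra hypothesis otherwise) is a point the paper glosses over, but it does not change the argument.
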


\begin{proof}
Let $\{B_{\alpha}:\alpha\in \Delta\}$ be a family of m-bi-ideals of
an ordered semigroup $A$ with bipotencies $\{t_{\alpha}:\alpha\in
\Delta\}$. Now we have to show that $B=\cap_{\alpha\in
\Delta}B_{\alpha}$ is again an m-bi-ideal of $A$ with bipotency
max$\{t_{\alpha}:\alpha\in \Delta\}$. Now $B=\cap_{\alpha\in
\Delta}B_{\alpha}$ is also a subsemigroup of $A$ being intersection
of subsemigroups of $A$. Since each $B_{\alpha}$ is an m-bi-ideal
with bipotencies $t_{\alpha}$ for all $\alpha\in \Delta$ then
$B_{\alpha}A^{t_{\alpha}}B_{\alpha}\subseteq B_{\alpha}$ and
$(B_{\alpha}]=B_{\alpha}$. Now $BA^{max\{t_{\alpha}:\alpha\in
\Delta\}}B\subseteq B_{\alpha}A^{t_{\alpha}}B_{\alpha}\subseteq
B_{\alpha}$, for all $\alpha\in \Delta$. Again $(B]=(\cap_{\alpha\in
\Delta}B_{\alpha}]\subseteq \cap_{\alpha\in
\Delta}(B_{\alpha}]\subseteq \cap_{\alpha\in
\Delta}B_{\alpha}]\cap_{\alpha\in \Delta}B_{\alpha}=B$. Thus
$(B]=B$. Hence $\cap_{\alpha\in \Delta}B_{\alpha}$ is  an m-bi-ideal
of $A$ with bipotency max$\{t_{\alpha}:\alpha\in \Delta\}$.

\end{proof}

\section{\textbf{m-quasi-ideal  }}
\begin{Definition}
A subsemigroup $Q$ of an ordered semigroup $A$ is called
m-quasi-ideal of $A$ if $(A^{m}Q]\cap (QA^{m}]\subseteq Q$ and
$(Q]=Q$, where $m\geq 1$, called quasipotency of quasi-ideal $Q$.
\end{Definition}

\begin{theorem}

For any $m\geq 1$, a quasi-ideal is an m-quasi-ideal.

\end{theorem}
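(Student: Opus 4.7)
The plan is to reduce the m-quasi-ideal condition directly to the quasi-ideal condition by exploiting the fact, recorded in the preliminaries, that $A^{m}\subseteq A$ for every positive integer $m$, since $A$ is closed under its own multiplication. This monotone containment will allow me to dominate the powers appearing in the m-quasi-ideal definition by a single copy of $A$.

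First I would fix a quasi-ideal $Q$ of the ordered semigroup $A$, so that by definition $Q$ is a subsemigroup with $(AQ]\cap (QA]\subseteq Q$ and $(Q]=Q$. I would then note $A^{m}Q\subseteq AQ$ and $QA^{m}\subseteq QA$, a straightforward consequence of $A^{m}\subseteq A$. Invoking Lemma \ref{AB}(3) (monotonicity of the operator $(\,\cdot\,]$), I conclude $(A^{m}Q]\subseteq (AQ]$ and $(QA^{m}]\subseteq (QA]$.

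Combining these two inclusions gives
\[
(A^{m}Q]\cap(QA^{m}]\;\subseteq\;(AQ]\cap(QA]\;\subseteq\; Q,
\]
where the last step is exactly the quasi-ideal hypothesis. Together with $(Q]=Q$, which is carried over verbatim from the quasi-ideal definition, this verifies that $Q$ meets the m-quasi-ideal definition with quasipotency $m$.

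There is essentially no obstacle here; the statement is a one-line corollary of $A^{m}\subseteq A$ together with the monotonicity properties collected in Lemma \ref{AB}. The only thing to be careful about is to state and cite the ``$A^{m}\subseteq A$'' step from the preliminaries rather than leaving it implicit, since it is precisely what lets $m$ be absorbed into a single $A$.
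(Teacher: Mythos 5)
Your proof is correct and is essentially identical to the paper's own argument: the paper also writes $(QA^{m}]\cap (A^{m}Q]\subseteq (QA]\cap (AQ]\subseteq Q$, which is exactly your chain of inclusions with the $A^{m}\subseteq A$ and monotonicity steps left implicit. You have merely made those two justifications explicit, which is a harmless (and arguably welcome) elaboration.
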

\begin{proof}
Let $Q$ be a quasi-ideal of $A$, then $(QA^{m}]\cap
(A^{m}Q]\subseteq
 (QA]\cap (AQ]\subseteq Q$ and $(Q]=Q$. Hence $Q$ is an m-quasi-ideal of
 $A$.
\end{proof}

\begin{corollary}

Every m-quasi-ideal of an ordered semigroup is an n-quasi-ideal for
all $n\geq m$.

\end{corollary}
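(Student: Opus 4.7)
The plan is to reduce the $n$-quasi-ideal condition for $n\geq m$ back to the known $m$-quasi-ideal condition, using the elementary fact (noted in the introduction) that for a subsemigroup $S$ of $A$ one has $S^{r}\subseteq S^{t}$ whenever $r\geq t$. Since $A$ is itself a subsemigroup of $A$, this gives $A^{n}\subseteq A^{m}$ for all $n\geq m$, which is the only monotonicity input needed.

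First I would take $Q$ to be an $m$-quasi-ideal, so $(A^{m}Q]\cap(QA^{m}]\subseteq Q$ and $(Q]=Q$, and fix any $n\geq m$. From $A^{n}\subseteq A^{m}$ I get $A^{n}Q\subseteq A^{m}Q$ and $QA^{n}\subseteq QA^{m}$, and then by part (3) of Lemma \ref{AB} (monotonicity of the $(\,\cdot\,]$ operator) it follows that $(A^{n}Q]\subseteq(A^{m}Q]$ and $(QA^{n}]\subseteq(QA^{m}]$. Intersecting gives
\[
(A^{n}Q]\cap(QA^{n}]\subseteq(A^{m}Q]\cap(QA^{m}]\subseteq Q,
\]
which is the required $n$-quasi-ideal inclusion. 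Since $(Q]=Q$ is already in hand, $Q$ is an $n$-quasi-ideal.

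There is essentially no obstacle here; the whole argument is a two-line monotonicity chase, and the only thing to check is that one is entitled to write $A^{n}\subseteq A^{m}$ for $n\geq m$, which is exactly what the preliminaries record. I would therefore keep the writeup short and simply cite Lemma \ref{AB}(3) together with the preliminary observation on powers of subsemigroups.
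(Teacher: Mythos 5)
Your proof is correct and follows essentially the same route as the paper: both rest on the containment $A^{n}\subseteq A^{m}$ for $n\geq m$ together with monotonicity of the operator $(\,\cdot\,]$, yielding $(A^{n}Q]\cap(QA^{n}]\subseteq(A^{m}Q]\cap(QA^{m}]\subseteq Q$. You are slightly more explicit than the paper in justifying the inclusion of powers and in citing Lemma \ref{AB}(3), but the argument is the same.
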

\begin{proof}
Let $Q$ be an m-quasi-ideal of an ordered semigroup $A$ then
$(QA^{n}]\cap (A^{n}Q]\subseteq (QA^{m}]\cap (A^{m}Q]\subseteq Q$
that is $(QA^{n}]\cap (A^{n}Q]\subseteq Q$ and $(Q]=Q$. $Q$ is an
n-quasi-ideal of $A$.
\end{proof}

\begin{theorem}\label{LEFT-QUASI}

Every m-left/m-right ideal and hence every m-ideal of an ordered
semigroup $A$ is an m-quasi-ideal of $A$.

\end{theorem}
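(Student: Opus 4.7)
The plan is to argue directly from the definitions, using only the monotonicity and idempotence properties of the $(\,\cdot\,]$ operator recorded in Lemma \ref{AB}. There is no substantial obstacle here: once one unpacks what it means for $L$ to be an m-left ideal, the m-quasi-ideal condition drops out by taking an intersection with a superset.

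First, I would treat the m-left ideal case. Let $L$ be an m-left ideal of $A$, so that $A^{m}L\subseteq L$ and $(L]=L$. Applying part (3) of Lemma \ref{AB} to $A^{m}L\subseteq L$ gives $(A^{m}L]\subseteq (L]=L$. Consequently
\[
(A^{m}L]\cap (LA^{m}]\subseteq (A^{m}L]\subseteq L,
\]
and together with $(L]=L$ this is exactly the definition of an m-quasi-ideal (note that $L$ is already a subsemigroup). The m-right ideal case is the mirror argument: if $R$ is an m-right ideal then $RA^{m}\subseteq R$ forces $(RA^{m}]\subseteq (R]=R$, so $(A^{m}R]\cap (RA^{m}]\subseteq (RA^{m}]\subseteq R$.

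Finally, since an m-ideal is by definition both an m-left and an m-right ideal, the m-ideal case is an immediate consequence of either of the two arguments above. The only step that could trip someone up is remembering to invoke Lemma \ref{AB}(3) to push the set inclusion through the $(\,\cdot\,]$ closure; beyond that the proof is a one-line inclusion.
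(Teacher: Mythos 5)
Your proposal is correct and follows essentially the same route as the paper: both establish $(A^{m}L]\cap (LA^{m}]\subseteq (A^{m}L]\subseteq (L]=L$ for an m-left ideal $L$ (you simply make the appeal to Lemma \ref{AB}(3) explicit) and handle the m-right and m-ideal cases symmetrically.
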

\begin{proof}
Let $L$ be an m-left ideal of an ordered semigroup $A$. Now
$(A^{m}L]\cap (LA^{m}]\subseteq (A^{m}L]\subseteq (L]=L$ infers that
$(A^{m}L]\cap (LA^{m}]\subseteq L$ and $(L]=L$. Hence $L$ is an
m-quasi-ideal of $A$.
\end{proof}

Hence From the previous example \ref{example1}, we can see that
$\{x,w\}$ is an m-quasi-ideal but not an quasi-ideal of $A$.

\begin{theorem}\label{intersection quasi}
The intersection of a family of m-quasi-ideals of an ordered
semigroup $A$ with quasipotencies $t_{1},t_{2},......$ is also an
m-quasi-ideal with quasipotency max$\{t_{1},t_{2},.....\}$.
(Provided the intersection is non-empty.)

\end{theorem}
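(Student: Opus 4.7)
The plan is to verify the three defining properties of an m-quasi-ideal for $Q=\bigcap_{\alpha\in\Delta}Q_\alpha$ with quasipotency $t=\max\{t_\alpha:\alpha\in\Delta\}$, where $\{Q_\alpha:\alpha\in\Delta\}$ is the given family of m-quasi-ideals with quasipotencies $t_\alpha$. First I would note that $Q$ is a subsemigroup, since the intersection of a family of subsemigroups of $A$ is again a subsemigroup. Next I would deal with $(Q]=Q$: the inclusion $Q\subseteq (Q]$ is Lemma \ref{AB}(1), while the reverse inclusion uses Lemma \ref{AB}(3)--(4) in the form $(Q]=\bigl(\bigcap_\alpha Q_\alpha\bigr]\subseteq \bigcap_\alpha (Q_\alpha]=\bigcap_\alpha Q_\alpha = Q$, exactly as in the analogous step for the m-bi-ideal intersection proved just above.

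The crux is the inclusion $(A^{t}Q]\cap(QA^{t}]\subseteq Q$, and the key observation is that the corollary preceding this theorem allows every $Q_\alpha$ to be regarded as a $t$-quasi-ideal. Indeed, since $t_\alpha\le t$, that corollary gives $(A^{t}Q_\alpha]\cap(Q_\alpha A^{t}]\subseteq Q_\alpha$ for each $\alpha\in\Delta$. Combined with $Q\subseteq Q_\alpha$ and the monotonicity of $(\cdot\,]$ from Lemma \ref{AB}(3), this yields $(A^{t}Q]\subseteq(A^{t}Q_\alpha]$ and $(QA^{t}]\subseteq(Q_\alpha A^{t}]$, so
\begin{equation*}
(A^{t}Q]\cap(QA^{t}]\;\subseteq\;(A^{t}Q_\alpha]\cap(Q_\alpha A^{t}]\;\subseteq\;Q_\alpha
\end{equation*}
for every $\alpha\in\Delta$. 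Intersecting over $\alpha$ gives $(A^{t}Q]\cap(QA^{t}]\subseteq\bigcap_{\alpha\in\Delta}Q_\alpha=Q$, which is exactly the required condition for $Q$ to be an m-quasi-ideal with quasipotency $t$.

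There is no serious obstacle: the argument is essentially a template copied from the m-bi-ideal intersection theorem. The only delicate point is bookkeeping of quasipotencies---without the corollary one cannot directly combine the inclusions corresponding to different $t_\alpha$, because the defining inclusion for each $Q_\alpha$ uses its own power $A^{t_\alpha}$. Harmonising all quasipotencies to the common value $t=\max\{t_\alpha\}$ before taking the intersection is what makes the chain of inclusions go through cleanly.
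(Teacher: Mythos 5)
Your proposal is correct and follows essentially the same route as the paper: both verify the subsemigroup and $(Q]=Q$ conditions via Lemma \ref{AB}, and both reduce the key inclusion to $(A^{t}Q]\cap(QA^{t}]\subseteq(A^{t}Q_\alpha]\cap(Q_\alpha A^{t}]\subseteq Q_\alpha$ for each $\alpha$. The only cosmetic difference is that you invoke the preceding corollary to pass from quasipotency $t_\alpha$ to $t$, while the paper inlines the same containment $(A^{t}Q_\alpha]\subseteq(A^{t_\alpha}Q_\alpha]$ directly.
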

\begin{proof}
Let $\{Q_{\lambda}:\lambda\in I\}$ be a family of m-quasi-ideals of
an ordered semigroup $A$ with quasipotencies $\{t_{\lambda}:
\lambda\in I\}$ then $(A^{t_{\lambda}}Q_{\lambda}]\cap
(Q_{\lambda}A^{t_{\lambda}}]\subseteq Q_{\lambda}$ and
$(Q_{\lambda}]=Q_{\lambda}$, for all $\lambda\in I$. Let
$t=max\{t_{\lambda}: \lambda\in I\}$ and $Q=\cap_{\lambda\in
I}Q_{\lambda}$ and $Q$ is a subsemigroup of $A$ being intersection
of subsemigroups of $A$. Now $(A^{t}Q]\cap (QA^{t}]\subseteq
(A^{t}Q_{\lambda}]\cap (Q_{\lambda}A^{t}]\subseteq
(A^{t_{\lambda}}Q_{\lambda}]\cap
(Q_{\lambda}A^{t_{\lambda}}]\subseteq Q_{\lambda}$, for all $\lambda
\in I$. Thus $(A^{t}Q_{\lambda}]\cap (Q_{\lambda}A^{t}]\subseteq Q$
and $(Q]=(\cap_{\lambda\in I}Q_{\lambda}]\subseteq \cap_{\lambda\in
I}(Q_{\lambda}]=\cap_{\lambda\in I}Q_{\lambda}=Q$. Hence
$\cap_{\lambda\in I}Q_{\lambda}$ is an m-quasi-ideal with
quasi-potency max$\{t_{\lambda}: \lambda\in I\}$.
\end{proof}

\begin{lemma}\label{LR}
Let $A$ be an ordered semigroup and $L$ be an m-left ideal and $R$
be an m-right ideal of $A$. Then $R\cap L$ is an m-quasi-ideal of
$A$.

\end{lemma}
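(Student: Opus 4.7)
The plan is to verify the two defining conditions of an $m$-quasi-ideal for the set $R \cap L$, after first noting that it is a subsemigroup of $A$. Since $R$ and $L$ are each subsemigroups of $A$, their intersection (assumed non-empty, as in the earlier intersection results of the paper) is again a subsemigroup, so that preliminary condition is free.

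For the main containment, I would exploit the monotonicity of the $(\,\cdot\,]$ operation from Lemma \ref{AB}. Since $R\cap L \subseteq L$, I get $A^m(R\cap L) \subseteq A^m L \subseteq L$, hence $(A^m(R\cap L)] \subseteq (L] = L$ because $L$ is an $m$-left ideal. Symmetrically, since $R\cap L \subseteq R$, I have $(R\cap L)A^m \subseteq RA^m \subseteq R$, so $((R\cap L)A^m] \subseteq (R] = R$ because $R$ is an $m$-right ideal. Intersecting these two inclusions yields
\[
\bigl(A^m(R\cap L)\bigr] \cap \bigl((R\cap L)A^m\bigr] \subseteq L \cap R = R\cap L,
\]
which is exactly the $m$-quasi-ideal condition with quasipotency $m$.

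For the second condition $((R\cap L)] = R\cap L$, I would apply Lemma \ref{AB}(4) to obtain $(R\cap L] \subseteq (R] \cap (L] = R\cap L$, while the reverse containment $R\cap L \subseteq (R\cap L]$ is Lemma \ref{AB}(1). Combining gives equality, completing the proof.

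I do not anticipate a genuine obstacle here: everything reduces to one application of monotonicity plus the defining containments of $L$ and $R$. The only minor issue worth flagging is the non-emptiness of $R\cap L$, which should be assumed (consistently with analogous intersection statements such as Proposition \ref{1}(3) and Theorem \ref{intersection quasi}); in fact, since the paper works with ordered semigroups containing an identity $\{1\}$, one can often guarantee $1 \in R \cap L$ whenever $R$ and $L$ themselves contain $1$, but the lemma as stated is most safely read under the standing non-emptiness convention.
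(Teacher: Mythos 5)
Your verification is correct, but it is organized differently from the paper's. You prove the $m$-quasi-ideal conditions directly: $A^m(R\cap L)\subseteq A^mL\subseteq L$ and $(R\cap L)A^m\subseteq RA^m\subseteq R$, then intersect, and check $(R\cap L]=R\cap L$ via Lemma \ref{AB}. The paper instead only establishes non-emptiness and $(R\cap L]=R\cap L$, and then outsources the rest to two earlier results: Theorem \ref{LEFT-QUASI} (every $m$-left and $m$-right ideal is an $m$-quasi-ideal) and Theorem \ref{intersection quasi} (a non-empty intersection of $m$-quasi-ideals is an $m$-quasi-ideal). Your route is more self-contained and arguably cleaner, since it avoids the bookkeeping about quasipotencies in Theorem \ref{intersection quasi}; the paper's route buys reuse of already-proved machinery. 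The one substantive point where the paper does more than you is non-emptiness: you treat it as a hypothesis to be added, whereas the paper derives it, showing $RL\subseteq RA\subseteq RA^m\subseteq R$ and $RL\subseteq A^mL\subseteq L$ (using the standing assumption that $A$ contains $1$, so $A\subseteq A^m$), hence $\emptyset\neq RL\subseteq R\cap L$. Since the lemma is stated without a non-emptiness proviso (unlike Proposition \ref{1}(3)), that derivation is genuinely part of the proof, and your appeal to $1\in R\cap L$ is not a substitute since $m$-ideals need not contain $1$; you should incorporate the $RL\subseteq R\cap L$ argument rather than leaving non-emptiness as an assumption.
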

\begin{proof}
Since $RL\subseteq RS\subseteq
R\cdot1\cdot1\cdot1\cdot\cdot\cdot1\cdot S\subseteq RS^{m}\subseteq
R$ and similarly $RL\subseteq L$ infers that $RL\subseteq R\cap L$
thus $R\cap L\neq \emptyset$. Now $(R\cap L]\subseteq (R]\cap
(L]\subseteq R\cap L$ hence $(R\cap L]=R\cap L$. Then by Theorem
\ref{LEFT-QUASI},\ref{intersection quasi} follows that $R\cap L$ is
an m-quasi-ideal of $A$.
\end{proof}

\begin{Definition}
A subsemigroup $P$ of an ordered semigroup $A$ has the
m-intersection property if $P$ is  intersection of an m-left ideal
and an m-right ideal of $A$.
\end{Definition}

\begin{theorem}\label{intersection property}
An m-quasi-ideal $Q$ of an ordered semigroup $A$ has the
m-intersection property if and only if $(Q\cup A^{m}Q]\cap (Q\cup
QA^{m}]=Q$.

\end{theorem}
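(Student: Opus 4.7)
The plan is to observe that the sets appearing in the equation are exactly the principal m-left and m-right ideals generated by $Q$: by Theorem \ref{Principal left ideal}, $(Q)_{m-l}=(Q\cup A^{m}Q]$ and $(Q)_{m-r}=(Q\cup QA^{m}]$. Since $Q$ is a subsemigroup (being an m-quasi-ideal), this theorem applies. So the statement reduces to: $Q$ is an intersection of some m-left and some m-right ideal if and only if it is the intersection of its principal m-left and principal m-right ideals.

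For the "if" direction I would just take $L := (Q\cup A^{m}Q]$ and $R := (Q\cup QA^{m}]$. These are an m-left and m-right ideal respectively (Theorem \ref{Principal left ideal}), and by hypothesis $L\cap R = Q$, so $Q$ trivially has the m-intersection property.

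For the "only if" direction, suppose $Q = L\cap R$ for some m-left ideal $L$ and m-right ideal $R$ of $A$. Since $Q\subseteq L$ and $L$ is an m-left ideal, $A^{m}Q\subseteq A^{m}L\subseteq L$, hence $Q\cup A^{m}Q\subseteq L$; by Lemma \ref{AB}(3) and $(L]=L$, this gives $(Q\cup A^{m}Q]\subseteq L$. By an analogous argument $(Q\cup QA^{m}]\subseteq R$. Intersecting, $(Q\cup A^{m}Q]\cap (Q\cup QA^{m}]\subseteq L\cap R=Q$. The reverse inclusion is immediate because $Q\subseteq (Q\cup A^{m}Q]$ and $Q\subseteq (Q\cup QA^{m}]$ via Lemma \ref{AB}(1).

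There is essentially no obstacle here; the content of the result is that the principal m-left and m-right ideals are the canonical witnesses for the m-intersection property, and everything follows from the already-established description of these principal ideals together with the basic properties of the $(\cdot]$ operator in Lemma \ref{AB}. The only thing to be slightly careful about is invoking Theorem \ref{Principal left ideal}, which requires $Q$ to be a subsemigroup — but this is built into the definition of an m-quasi-ideal.
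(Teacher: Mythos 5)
Your proposal is correct and follows essentially the same route as the paper: the backward direction takes the principal m-left and m-right ideals $(Q\cup A^{m}Q]$ and $(Q\cup QA^{m}]$ as the witnessing ideals via Theorem \ref{Principal left ideal}, and the forward direction shows these principal ideals sit inside any witnessing pair $L$, $R$, so their intersection is squeezed down to $L\cap R=Q$. Your use of monotonicity of $(\cdot]$ in place of the paper's $(Q\cup A^{m}Q]\subseteq Q\cup(A^{m}Q]$ step is a cosmetic difference only.
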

\begin{proof}
First consider $Q$ has the m-intersection property. Its evident that
$Q\subseteq (Q\cup A^{m}Q]$ and $Q\subseteq (Q\cup QA^{m}]$ infers
that $Q\subseteq (Q\cup A^{m}Q]\cap (Q\cap QA^{m}]$. Since $Q$ has
the m-intersection property then there exists an m-left ideal $L$
and m-right ideal $R$ such that $Q=L\cap R$. Thus $Q\subseteq L$ and
$Q\subseteq R$ so that $(A^{m}Q]\subseteq (A^{m}L]\subseteq (L]=L$
and $(QA^{m}]\subseteq (RA^{m}]\subseteq (R]=R$. Now $(Q\cup
A^{m}Q]\subseteq Q\cup (A^{m}Q]\subseteq L$ and $(Q\cup
QA^{m}]\subseteq Q\cup (QA^{m}]\subseteq R$. Hence $(Q\cup
A^{m}Q]\cap (Q\cup QA^{m}]\subseteq L\cap R=Q$. Hence $Q=(Q\cup
A^{m}Q]\cap (Q\cup QA^{m}]$.

Conversely, Suppose $Q=(Q\cup A^{m}Q]\cap (Q\cup QA^{m}]$. Since it
is clear that from Theorem \ref{Principal left ideal} $(Q\cup
A^{m}Q]$ and $(Q\cup QA^{m}]$ are m-left and m-right ideal
respectively. Hence $Q$ has the m-intersection property.

\end{proof}

\begin{lemma}\label{Q=L(Q)R(Q)}
Let $Q$ be an m-quasi-ideal of an ordered semigroup $A$ then
$Q=L(Q)\cap R(Q)=(Q\cup A^{m}Q]\cap (Q\cup QA^{m}]$.
\end{lemma}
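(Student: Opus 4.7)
The plan is to prove the two equalities in a single pass by squeezing $Q$ between the intersection $(Q\cup A^{m}Q]\cap(Q\cup QA^{m}]$ from both sides, and then invoke Theorem \ref{Principal left ideal} to identify $(Q\cup A^{m}Q]$ with $L(Q)$ and $(Q\cup QA^{m}]$ with $R(Q)$.

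For the easy inclusion, I would note that $Q\subseteq Q\cup A^{m}Q$ together with part (1) of Lemma \ref{AB} and the hypothesis $(Q]=Q$ gives $Q=(Q]\subseteq(Q\cup A^{m}Q]$, and symmetrically $Q\subseteq(Q\cup QA^{m}]$; hence $Q\subseteq(Q\cup A^{m}Q]\cap(Q\cup QA^{m}]$.

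For the reverse inclusion, the key move is to split the closure of the union using part (5) of Lemma \ref{AB}:
\begin{equation*}
(Q\cup A^{m}Q]=(Q]\cup(A^{m}Q]=Q\cup(A^{m}Q],\qquad (Q\cup QA^{m}]=Q\cup(QA^{m}].
\end{equation*}
Now pick any $x\in(Q\cup A^{m}Q]\cap(Q\cup QA^{m}]$. If $x\in Q$ we are done. Otherwise $x\notin Q$ forces $x\in(A^{m}Q]$ from the first factor and $x\in(QA^{m}]$ from the second, so $x\in(A^{m}Q]\cap(QA^{m}]\subseteq Q$ by the defining property of an m-quasi-ideal — a contradiction. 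This proves $(Q\cup A^{m}Q]\cap(Q\cup QA^{m}]\subseteq Q$, and combined with the first inclusion yields $Q=(Q\cup A^{m}Q]\cap(Q\cup QA^{m}]$.

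Finally, by Theorem \ref{Principal left ideal}, the smallest m-left ideal containing $Q$ is $L(Q)=(Q)_{m-l}=(Q\cup A^{m}Q]$ and the smallest m-right ideal containing $Q$ is $R(Q)=(Q)_{m-r}=(Q\cup QA^{m}]$, so the identity just obtained reads $Q=L(Q)\cap R(Q)=(Q\cup A^{m}Q]\cap(Q\cup QA^{m}]$, as required. There is no real obstacle here; the only subtle point is remembering to exchange $(Q\cup A^{m}Q]$ for $Q\cup(A^{m}Q]$ via Lemma \ref{AB}(5), which is what allows the case split that powers the non-trivial inclusion.
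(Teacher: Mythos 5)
Your proof is correct and follows essentially the same route as the paper's: both establish the trivial inclusion and then handle the reverse one by splitting $(Q\cup A^{m}Q]$ and $(Q\cup QA^{m}]$ into the cases ``element below something in $Q$'' versus ``element in $(A^{m}Q]\cap(QA^{m}]$,'' the latter being absorbed into $Q$ by the m-quasi-ideal condition. The only difference is cosmetic: you invoke Lemma \ref{AB}(5) to distribute the down-closure over the union and explicitly identify $(Q\cup A^{m}Q]=L(Q)$ and $(Q\cup QA^{m}]=R(Q)$ via Theorem \ref{Principal left ideal}, steps the paper leaves implicit.
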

\begin{proof}
The inclusion $Q\subseteq (Q\cup A^{m}Q]\cap (Q\cup QA^{m}]$ is
always true.

Conversely, let $a\in (A^{m}Q\cup Q]\cap (Q\cup QA^{m}]$ then $a\leq
w$ or $a\leq xb$ and $a\leq cy$ for some $x,y\in A^{m}$, $w,b,c\in
Q$. Since $Q$ is m-quasi-ideal then $a\leq w$ infers that $a\in
(Q]=Q$. Again $a\leq xb$ and $a\leq cy$ implies that $a\in
(A^{m}Q]\cap (QA^{m}]\subseteq Q$. Thus $(Q\cup A^{m}Q]\cap (Q\cup
QA^{m}]=Q$.

\end{proof}

Let $B$ be a non-empty subset of an ordered semigroup $A$. We denote
the least m-quasi-ideal of containing $B$ by $Q(B)$. If  $B={b}$, we
denote  $Q(\{b\})$ by $Q(b)$.
\begin{corollary}
Let $A$ be an ordered semigroup. Then
\begin{enumerate}

  \item \vspace{-.4cm}
For every $x\in A$, $Q(x)=L(x)\cap R(x)=(x\cup A^{m}x]\cap (x\cup
xA^{m}]$
 \item \vspace{-.4cm}
For every  $\emptyset\neq X\subseteq A$, $Q(X)=L(X)\cap R(X)=(X\cup
A^{m}X]\cap (X\cup XA^{m}]$.
\end{enumerate}
\end{corollary}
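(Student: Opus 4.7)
The plan is to reduce both parts of the corollary to Lemma~\ref{Q=L(Q)R(Q)} combined with the description of principal one-sided ideals from Theorem~\ref{Principal left ideal}. By Theorem~\ref{Principal left ideal} and its corollary, the principal m-left and m-right ideals generated by a single element $x$ are $L(x)=(x\cup A^{m}x]$ and $R(x)=(x\cup xA^{m}]$; the analogous formulas $L(X)=(X\cup A^{m}X]$ and $R(X)=(X\cup XA^{m}]$ hold for a nonempty subset $X\subseteq A$. So in both cases the rightmost equality is just a substitution, and the real content is the middle equality $Q(x)=L(x)\cap R(x)$, respectively $Q(X)=L(X)\cap R(X)$.

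To prove $Q(x)\subseteq L(x)\cap R(x)$, I would first observe that $L(x)\cap R(x)$ is itself an m-quasi-ideal of $A$ that contains $x$: the containment of $x$ is immediate from $x\in L(x)$ and $x\in R(x)$, and Lemma~\ref{LR} guarantees that the intersection of an m-left ideal with an m-right ideal is an m-quasi-ideal. Since $Q(x)$ is the \emph{least} m-quasi-ideal containing $x$, this inclusion follows from its minimality.

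The reverse inclusion $L(x)\cap R(x)\subseteq Q(x)$ is where Lemma~\ref{Q=L(Q)R(Q)} does the key work. Applying that lemma to the m-quasi-ideal $Q(x)$ gives $Q(x)=(Q(x)\cup A^{m}Q(x)]\cap(Q(x)\cup Q(x)A^{m}]$. Since $x\in Q(x)$, the monotonicity properties of the operator $(\,\cdot\,]$ recorded in Lemma~\ref{AB} yield $(x\cup A^{m}x]\subseteq(Q(x)\cup A^{m}Q(x)]$ and $(x\cup xA^{m}]\subseteq(Q(x)\cup Q(x)A^{m}]$, so intersecting gives exactly $L(x)\cap R(x)\subseteq Q(x)$. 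This completes (1).

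Part~(2) is the verbatim argument with $x$ replaced by a nonempty set $X$: the containment $X\subseteq L(X)\cap R(X)$ gives $Q(X)\subseteq L(X)\cap R(X)$ by minimality, and applying Lemma~\ref{Q=L(Q)R(Q)} to $Q(X)$ together with $X\subseteq Q(X)$ yields the reverse inclusion. I do not anticipate any real obstacle; the only point requiring a brief check is that $L(X)\cap R(X)$ is a subsemigroup (as required by the definition of m-quasi-ideal), and this is precisely what Lemma~\ref{LR} already supplies.
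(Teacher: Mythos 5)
Your argument is essentially identical to the paper's own proof: both directions are obtained exactly as the paper does, using Lemma~\ref{LR} plus minimality of $Q(x)$ for $Q(x)\subseteq L(x)\cap R(x)$, and Theorem~\ref{Principal left ideal} together with Lemma~\ref{Q=L(Q)R(Q)} for the reverse inclusion. Your version is if anything slightly more explicit than the paper's (which dispatches part~(2) with ``it can be proved easily''), but the route is the same.
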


\begin{proof}
$(1)$: Let $x\in A$, Since $x\in L(x)\cap R(x)$, hence non-empty
then by \ref{LR}, $L(x)\cap R(x)$ is an m-quasi-ideal infers that
$Q(x)\subseteq L(x)\cap R(x)$. Again by Theorem \ref{Principal left
ideal}, $L(x)\cap R(x)=(x\cup A^{m}x]\cap (x\cup xA^{m}]\subseteq
(Q(x)\cup A^{m}Q(x)]\cap (Q(x)\cup Q(x)A^{m}]=Q(x)$. Thus
$Q(x)=L(x)\cap R(x)$.

$(2)$: It can be proved easily.
\end{proof}

It is simple to verify this every m-left and m-right ideal of an
ordered semigroup $A$ have the intersection property.

Let $L$ be an m-left ideal of an ordered semigroup $A$. By Theorem
\ref{LEFT-QUASI} its obvious that $L$ is an m-quasi ideal of $A$.
Now $(L\cup A^{m}L]\cap (L\cup LA^{m}]=\{(L]\cup (A^{m}L]\}\cap
\{(L]\cup (LA^{m}]\}=\{L\cup (A^{m}L]\}\cap \{L\cup (LA^{m}]\}=L\cup
((LA^{m}]\cap (A^{m}L])\subseteq L\cup L=L$. Again $L\subseteq L\cup
((A^{m}L]\cap (LA^{m}])=\{L\cup (A^{m}L]\}\cap \{L\cup
(LA^{m}]\}=\{(L]\cup (A^{m}L]\}\cap \{(L]\cup (LA^{m}]\}=(L\cup
A^{m}L]\cap (L\cup LA^{m}]$. Hence by previous result
\ref{intersection property} $L$ has the m-intersection property.
Likewise we can show that every m-right ideal has the m-intersection
property.

\begin{theorem}
For m-quasi-ideal $Q$ of an ordered semigroup $A$ if
$A^{m}Q\subseteq QA^{m}$ or $QA^{m}\subseteq A^{m}Q$ then $Q$ has
the m-intersection property.

\end{theorem}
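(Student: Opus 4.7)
The plan is to invoke Theorem \ref{intersection property}, which converts the m-intersection property for $Q$ into the equality $(Q\cup A^{m}Q]\cap (Q\cup QA^{m}]=Q$. The inclusion of $Q$ in the right-hand intersection is immediate, since $Q=(Q]\subseteq (Q\cup A^{m}Q]$ and $Q=(Q]\subseteq (Q\cup QA^{m}]$. So the entire burden of proof is the reverse inclusion, and this is where the two alternative hypotheses come in.

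I would handle the case $A^{m}Q\subseteq QA^{m}$ first. Under this hypothesis $Q\cup A^{m}Q\subseteq Q\cup QA^{m}$, so by Lemma \ref{AB}(3) we get $(Q\cup A^{m}Q]\subseteq (Q\cup QA^{m}]$, and the intersection collapses to $(Q\cup A^{m}Q]$. Applying Lemma \ref{AB}(5) I would rewrite this as $(Q]\cup (A^{m}Q]=Q\cup (A^{m}Q]$, and then pick an arbitrary element $a$ of this union. If $a\in Q$ there is nothing to do; if $a\in (A^{m}Q]$ then $a\leq xq$ for some $x\in A^{m}$, $q\in Q$, and the hypothesis $A^{m}Q\subseteq QA^{m}$ forces $xq\in QA^{m}$, so $a\in (QA^{m}]$. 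Combined with $a\in (A^{m}Q]$, the defining m-quasi-ideal condition $(A^{m}Q]\cap (QA^{m}]\subseteq Q$ delivers $a\in Q$.

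The case $QA^{m}\subseteq A^{m}Q$ is handled by the obvious symmetric argument: the intersection now collapses to $(Q\cup QA^{m}]=Q\cup (QA^{m}]$, and any $a\leq qy$ with $q\in Q$, $y\in A^{m}$ satisfies $qy\in A^{m}Q$ by the hypothesis, so $a\in (A^{m}Q]\cap (QA^{m}]\subseteq Q$. I do not expect any real obstacle in this proof: the work is essentially bookkeeping with Lemma \ref{AB}, and the content of the argument is the single observation that the hypothesis lets us place an element that lies a priori in only one of $(A^{m}Q]$ or $(QA^{m}]$ automatically into the other as well, triggering the m-quasi-ideal condition.
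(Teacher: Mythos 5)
Your proof is correct and rests on the same key observation as the paper's: the hypothesis $A^{m}Q\subseteq QA^{m}$ forces $A^{m}Q$ (hence $(A^{m}Q]$) into $(A^{m}Q]\cap(QA^{m}]\subseteq Q$ via the m-quasi-ideal condition. The only cosmetic difference is that the paper packages this conclusion as ``$Q$ is an m-left ideal'' and cites its earlier remark that m-left ideals have the m-intersection property, while you verify the criterion of Theorem \ref{intersection property} directly; both routes are sound.
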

\begin{proof}
Without loss of generality, consider that $A^{m}Q\subseteq QA^{m}$
then $A^{m}Q=A^{m}Q\cap QA^{m}\subseteq (A^{m}Q]\cap
(QA^{m}]\subseteq Q$ infers that $A^{m}Q\subseteq Q$ that is $Q$ is
an m-left ideal of $A$. Thus $Q$ has the m-intersection property.
\end{proof}

\begin{theorem}\label{bi-quasi}
Every m-quasi-ideal of an ordered semigroup $A$ is its m-bi-ideal.

\end{theorem}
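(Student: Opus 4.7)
The plan is straightforward: unpack the two definitions and sandwich the bi-ideal product between the two one-sided sets appearing in the quasi-ideal condition. Let $Q$ be an m-quasi-ideal of $A$, so by definition $Q$ is a subsemigroup of $A$ with $(Q]=Q$ and $(A^{m}Q]\cap(QA^{m}]\subseteq Q$. To conclude that $Q$ is an m-bi-ideal of the same bipotency $m$, I only need to verify $QA^{m}Q\subseteq Q$, since $Q$ being a subsemigroup and $(Q]=Q$ are already in hand.

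First I would bound $QA^{m}Q$ from the left. Since $Q\subseteq A$, we have $QA^{m}Q\subseteq A\cdot A^{m}\cdot Q=A^{m+1}Q$. Now invoke the elementary fact recorded in the preliminaries, namely that $A^{r}\subseteq A^{t}$ whenever $r\geq t$ (which is valid because $A$ is assumed throughout the paper to contain an identity $1$). Taking $r=m+1$ and $t=m$ gives $A^{m+1}\subseteq A^{m}$, and hence $QA^{m}Q\subseteq A^{m}Q\subseteq (A^{m}Q]$ by part (1) of Lemma \ref{AB}. Symmetrically, using $Q\subseteq A$ on the right, $QA^{m}Q\subseteq QA^{m+1}\subseteq QA^{m}\subseteq (QA^{m}]$.

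Combining the two inclusions, $QA^{m}Q\subseteq (A^{m}Q]\cap(QA^{m}]\subseteq Q$, where the last step is exactly the m-quasi-ideal hypothesis on $Q$. This together with $(Q]=Q$ gives the defining conditions of an m-bi-ideal with bipotency $m$, completing the proof. There is no real obstacle here: the only subtle point is using $A^{m+1}\subseteq A^{m}$, and that is precisely where the standing assumption that $A$ has an identity is used.
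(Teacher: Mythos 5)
Your proof is correct and follows essentially the same route as the paper's: bound $QA^{m}Q$ inside both $QA^{m+1}\subseteq QA^{m}$ and $A^{m+1}Q\subseteq A^{m}Q$, pass to the closures, and apply the quasi-ideal condition. The only quibble is that the inclusion $A^{m+1}\subseteq A^{m}$ does not actually require the identity element --- it already follows from $AA\subseteq A$ by collapsing two adjacent factors, which is how the paper justifies $S^{r}\subseteq S^{t}$ for $r\geq t$ in the preliminaries.
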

\begin{proof}
Suppose that $Q$ be an m-quasi-ideal of an ordered semigroup $A$
then $QA^{m}Q\subseteq QA^{m}A\cap AA^{m}Q\subseteq QA^{m+1}\cap
A^{m+1}Q\subseteq QA^{m}\cap A^{m}Q\subseteq (QA^{m}]\cap
(A^{m}Q]\subseteq Q$ and $(Q]=Q$. $Q$ is an m-bi-ideal of $A$.
\end{proof}
\begin{Definition}
An element $a$ of an ordered semigroup $A$ is called m-regular if
$a\leq axa$, for some $x\in A^{m}$. An ordered semigroup $A$ is
m-regular if every element of $A$ is m-regular that is m-regular if
$a\in (aA^{m}a]$, for all $a\in A$.
\end{Definition}
Every regular ordered semigroup is an m-regular ordered semigroup
but the converse is not true.

\begin{lemma}\label{ASm}
Let $A$ be an m-regular ordered semigroup and $T$ be any non-empty
subset of $A$. Then
\begin{enumerate}

 \item \vspace{-.4cm}
$X\subseteq (TA^{m}]$, for any positive integer $m$.
 \item \vspace{-.4cm}
$X\subseteq (A^{m}T]$, for any positive integer $m$.

\end{enumerate}
\end{lemma}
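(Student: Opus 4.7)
The plan is to read the $X$ in the conclusion as the hypothesised set $T$ (the lemma only supplies one set), and to prove the two claimed inclusions elementwise by invoking $m$-regularity.

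For part (1), I would fix an arbitrary $t\in T\subseteq A$. By $m$-regularity there exists $x\in A^{m}$ with $t\leq txt$. Using associativity I would rewrite this as $t\leq t(xt)$. The key observation is that $xt\in A^{m}\!\cdot\! A = A^{m+1}$, and the preliminary remark that $A^{r}\subseteq A^{s}$ whenever $r\geq s$ (noted in the introduction for any subsemigroup, and in particular for $A$ itself) gives $A^{m+1}\subseteq A^{m}$. Hence $xt\in A^{m}$, so $t(xt)\in T\!\cdot\! A^{m}=TA^{m}$, and therefore $t\leq t(xt)$ with $t(xt)\in TA^{m}$, which by the definition of the down-closure means $t\in(TA^{m}]$. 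Since $t\in T$ was arbitrary, $T\subseteq(TA^{m}]$.

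Part (2) is entirely symmetric: starting again from $t\leq txt$, rebracket as $t\leq (tx)t$, note that $tx\in A\cdot A^{m}=A^{m+1}\subseteq A^{m}$, so $(tx)t\in A^{m}T$, and conclude $t\in(A^{m}T]$, giving $T\subseteq(A^{m}T]$.

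There is really no main obstacle here: the whole argument is a one-line unwinding of the definition of $m$-regularity together with associativity. The only subtlety worth stating explicitly is the use of $A^{m+1}\subseteq A^{m}$ to absorb the extra factor of $A$ that appears when the witness $x\in A^{m}$ is regrouped with one copy of $t$; without this inclusion the products $xt$ and $tx$ would only sit in $A^{m+1}$, not in $A^{m}$ as required by the target sets $TA^{m}$ and $A^{m}T$.
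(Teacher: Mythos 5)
Your proof is correct and follows essentially the same route as the paper: both start from $t\leq txt$ with $x\in A^{m}$ and absorb the trailing factor of $t$ into $A$ via $A^{m}A=A^{m+1}\subseteq A^{m}$ (the paper writes this as the chain $(aA^{m}a]\subseteq (TA^{m}A]\subseteq (TA^{m+1}]\subseteq (TA^{m}]$, while you do the same regrouping elementwise). Your reading of the misprinted $X$ as $T$ is also the intended one.
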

\begin{proof}
$(1)$: Let $a\in T$ then $a\leq axa$ for some $x\in A^{m}$, Since
$A$ is m-regular. Hence $a\in (aA^{m}a]\subseteq (TA^{m}T]\subseteq
(TA^{m}A]\subseteq (TA^{m+1}]\subseteq (TA^{m}]$ infers that
$T\subseteq (TA^{m}]$.

$(2)$: Similarly we can prove that $T\subseteq (A^{m}T]$.

\end{proof}

\begin{theorem}\label{m-regular}
The following conditions for an ordered semigroup $A$ are
equivalent:
\begin{enumerate}

 \item \vspace{-.4cm}
$A$ is m-regular.
 \item \vspace{-.4cm}
For every m-right ideal $R$ and m-left ideal $L$ of $A$, $(RL]=R\cap
L$.
 \item \vspace{-.4cm}
For every m-right ideal $R$ and m-left ideal $L$ of $A$,
\begin{enumerate}

\item \vspace{-.4cm}
$(R^{2}]=R$.
\item \vspace{-.4cm}
$(L^{2}]=L$
\item \vspace{-.4cm}
$(RL]$ is a m-quasi-ideal of $A$.

\end{enumerate}

\item \vspace{-.4cm}
The set of m-quasi ideals $Q_{A}$ of $A$ with the multiplication
$\circ$ defined on it by $Q_{1}\circ Q_{2}=(Q_{1}Q_{2}]$, for all
$Q_{1},Q_{2}\in Q_{A}$, is a m-regular semigroup.

\item \vspace{-.4cm}
Every m-quasi-ideal $Q$ has the form $(QA^{m}Q]=Q$.

\end{enumerate}

\end{theorem}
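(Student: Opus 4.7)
The plan is to anchor all equivalences at (1), establishing (1) $\Leftrightarrow$ (2), (2) $\Rightarrow$ (3), (3) $\Rightarrow$ (1), (1) $\Leftrightarrow$ (5), and (1) $\Leftrightarrow$ (4). The direction (1) $\Rightarrow$ (2) is routine: $(RL] \subseteq R \cap L$ uses $1 \in A^m$ to get $RL \subseteq RA^m \cap A^m L$, while m-regularity delivers the reverse via $a \leq axa$ with $ax \in RA^m \subseteq R$. For (2) $\Rightarrow$ (3), part (a) specializes (2) to $R$ and $L(a) = (A^m a]$: for $a \in R$, $a \in R \cap L(a) = (R L(a)] \subseteq (R A^m a] \subseteq (Ra] \subseteq (R^2]$; part (b) is symmetric; part (c) follows directly from (2) together with Lemma \ref{LR}.

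The key step is (3) $\Rightarrow$ (1). The strategy is to apply all three sub-statements of (3) to the principal ideals $R(a) = (aA^m]$ and $L(a) = (A^m a]$. From (3)(a) applied to $R(a)$, the relation $a \in (R(a)^2]$ produces $x_1, x_2 \in A^m$ with $a \leq ax_1 a x_2 = (ax_1 a) \cdot x_2$, so $a \in ((aA^m a] A^m]$; symmetrically, (3)(b) yields $a \in (A^m (aA^m a]]$. A short computation with Lemma \ref{AB} shows $(R(a) L(a)] = (aA^{2m} a] = (aA^m a]$ (using $A^{2m} \subseteq A^m$), so (3)(c) promotes $(aA^m a]$ itself to an m-quasi-ideal. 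Its intersection property $(A^m (aA^m a]] \cap ((aA^m a] A^m] \subseteq (aA^m a]$ then captures $a$, proving $a \in (aA^m a]$.

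For (1) $\Leftrightarrow$ (5), the forward direction combines $q \leq qxq \in QA^m Q$ with the general inclusion $(QA^m Q] \subseteq (QA^m] \cap (A^m Q] \subseteq Q$; the converse applies (5) to $Q(a) = L(a) \cap R(a)$ and unpacks $q_1 \leq ay_1$, $q_2 \leq y_2 a$ to collapse $q_1 x q_2 \leq a(y_1 x y_2) a$. For (1) $\Leftrightarrow$ (4), first verify that $(Q_A, \circ)$ is a well-defined semigroup: associativity follows from Lemma \ref{AB}(7), and closure $(Q_1 Q_2] \in Q_A$ under (1) uses m-regularity together with (5) to absorb middles. Choosing each factor $X_i = A$ gives an $m$-fold $\circ$-product $Y = (A^m]$, whence $Q \circ Y \circ Q = (QA^m Q] = Q$ by (5), which is m-regularity of $(Q_A, \circ, \subseteq)$; conversely, any such $Y$ satisfies $Y \subseteq (A^m]$, so (4) applied to $Q(a)$ reduces back to $a \in (aA^m a]$ via the same unpacking. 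The main obstacle is the step (3) $\Rightarrow$ (1): the conceptual leap is recognizing that (3)(c) promotes the seemingly ad hoc set $(aA^m a]$ to an m-quasi-ideal, after which (3)(a) and (3)(b) deliver $a$ into the defining intersection.
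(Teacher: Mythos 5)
Your proposal is correct in substance but takes a genuinely different route from the paper, which proves the single cycle $(1)\Rightarrow(2)\Rightarrow(3)\Rightarrow(4)\Rightarrow(5)\Rightarrow(1)$ and in particular never argues $(3)\Rightarrow(1)$ directly: it buries that implication inside the long computation of $(3)\Rightarrow(4)$ (establishing $Q=(A^{m}Q]\cap(QA^{m}]$, closure of $Q_{A}$ under $\circ$, and regularity of $(Q_{A},\circ)$) before extracting $(5)$ and finally $(1)$. Your shortcut for $(3)\Rightarrow(1)$ --- use $(3)(a)$ and $(3)(b)$ on $R(a)=(aA^{m}]$ and $L(a)=(A^{m}a]$ to place $a$ in $((aA^{m}a]A^{m}]$ and $(A^{m}(aA^{m}a]]$, then invoke $(3)(c)$ on $(R(a)L(a)]=(aA^{m}a]$ so that the defining intersection of this m-quasi-ideal swallows $a$ --- is sound and considerably shorter; note only that $a\in(aA^{m}]$, $a\in(A^{m}a]$ and $(aA^{2m}a]=(aA^{m}a]$ all lean on the paper's standing hypothesis that $A$ contains $1$ (without it you must take $R(a)=(a\cup aA^{m}]$ and dispose of the extra terms $a^{2}$ and $a^{2}A^{m}$, which again needs $1\in A^{m}$). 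What each approach buys: yours isolates the conceptual content of $(3)$ and keeps the messy condition $(4)$ off the critical path, while the paper's cycle obtains the semigroup structure on $Q_{A}$ en route at no extra cost. The one place your sketch under-delivers is closure of $Q_{A}$ under $\circ$ in $(1)\Rightarrow(4)$: ``absorb middles'' should be made precise, e.g.\ $(Q_{1}Q_{2}]$ is an m-bi-ideal (Theorem \ref{bi-quasi} together with the proposition that $(BX]$ is an m-bi-ideal), and under the already-established $(2)$ every m-bi-ideal $B$ satisfies $(BA^{m}]\cap(A^{m}B]=((BA^{m}](A^{m}B]]\subseteq(BA^{m}B]\subseteq B$, hence is an m-quasi-ideal; you should also verify $A\circ A\circ\cdots\circ A=(A^{m}]$ so that your witness $Y$ is honestly an $m$-fold $\circ$-product in $Q_{A}$.
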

\begin{proof}
$(1)\Rightarrow (2)$: First consider that $A$ is an m-regular
ordered semigroup. Let $R$ and $L$ be m-right ideal and m-left ideal
of $A$ respectively. Now by Lemma \ref{ASm}, $(RL]\subseteq
(A^{m}RL]\subseteq (A^{m}AL]\subseteq (A^{m+1}L]\subseteq
(A^{m}L]\subseteq (L]=L$. Similarly $(RL]\subseteq R$. Hence
$(RL]\subseteq R\cap L$. Again let $x\in R\cap L$. Since $A$ is
m-regular then $x\leq xyx$ for some $y\in A^{m}$ which implies that
$x\in (xA^{m}x]\subseteq (RA^{m}L]\subseteq (RL]$. Therefore $R\cap
L=(RL]$.

$(2)\Rightarrow (3)$: First assume that $(RL]=R\cap L$. Now by Lemma
\ref{LR}, $(RL]$ is an m-quasi-ideal of $A$. Now for ordered
semigroup $A$, then according to Theorem \ref{Principal left ideal}
m-left ideal generated by $R$ is $(R\cup A^{m}R]$. Now $R=R\cap
(R\cup A^{m}R]=(R(R\cup A^{m}R]]$ which infers that
$(R^{2}]\subseteq (R(R\cup A^{m}R]]=R$. Conversely, let $p\in
(R(R\cup A^{m}R]]$. Then $p\leq r_{1}x$, for some $r_{1}\in R$ and
$x\in (R\cup A^{m}R]$. Now $x\in (R\cup A^{m}R]=(R]\cup
(A^{m}R]=R\cup (A^{m}R]$ which implies that $x\leq r_{2}$ for some
$r_{2}\in R$ or $x\leq sr_{3}$ where $s\in A^{m}$, $r_{3}\in R$.
Hence $p\leq r_{1}sr_{3}\in (RA^{m}R]\subseteq (R^{2}]$. So $p\in
(R^{2}]$. Thus $R\subseteq (R^{2}]$ that is $(R^{2}]=R$.

Similarly we can show that $(L^{2}]=L$.

$(3)\Rightarrow (4)$: Suppose $Q_{A}$ be the set of all
m-quasi-ideals of $A$. Now for any $Q\in Q_{A}$, $(Q\cup A^{m}Q]$ is
the m-left ideal of $A$ generated by $Q$. Now by condition $3(b)$,
we have $Q\subseteq (Q\cup A^{m}Q]=((Q\cup A^{m}Q]^{2}]\subseteq
((Q\cup A^{m}Q](Q\cup A^{m}Q]]\subseteq (Q^{2}\cup A^{m}Q^{2}\cup
QA^{m}Q\cup (A^{m}Q)^{2}]=(Q^{2}]\cup (A^{m}QQ]\cup (QA^{m}Q]\cup
((A^{m}Q)(A^{m}Q)]\subseteq (Q^{2}]\cup (A^{m}AQ]\cup (AA^{m}Q]\cup
((A^{m}AA^{m}Q)]\subseteq (Q^{2}]\cup (A^{m}Q]\subseteq
(A\cdot1\cdot1\cdot1\cdot\cdot\cdot\cdot1\cdot Q]\cup
(A^{m}Q]\subseteq (A^{m}Q]$. Similarly we can prove that $Q\subseteq
(QA^{m}]$. Hence $Q\subseteq (A^{m}Q]\cap (QA^{m}]$. Since $Q$ is an
m-quasi-ideal of $A$ then $(A^{m}Q]\cap (QA^{m}]\subseteq Q$ which
follows that $Q=(A^{m}Q]\cap
(QA^{m}]$.$\cdot\cdot\cdot\cdot\cdot\cdot\cdot\cdot\cdot\cdot$ $(i)$

Now, from condition $(3)(c)$, it follows that $(RL]=((RL]A^{m}]\cap
(A^{m}(RL]]$.
$\cdot\cdot\cdot\cdot\cdot\cdot\cdot\cdot\cdot\cdot\cdot\cdot$
$(ii)$ for every m-left ideal $L$ and m-right ideal $R$ of $A$. Now
we have to prove that the product $Q_{1}\circ Q_{2}=(Q_{1}Q_{2}]$ of
two m-quasi-ideals $Q_{1}$ and $Q_{2}$ is also an m-quasi ideal of
$S$. $(Q_{1}Q_{2}]^{2}=(Q_{1}Q_{2}](Q_{1}Q_{2}]\subseteq
(Q_{1}Q_{2}Q_{1}Q_{2}]\subseteq (Q_{1}AQ_{1}Q_{2}]\subseteq
(Q_{1}A\cdot 1\cdot 1\cdot\cdot\cdot\cdot\cdot\cdot\cdot
Q_{1}Q_{2}]\subseteq (Q_{1}\cdot A\cdot A\cdot A\cdot\cdot\cdot
AQ_{1}Q_{2}]\subseteq ((Q_{1}A^{m}Q_{1})Q_{2}]\subseteq
(Q_{1}Q_{2}]$, by Theorem \ref{bi-quasi}. Hence
$(Q_{1}Q_{2}]^{2}=(Q_{1}Q_{2}]$ that is $(Q_{1}Q_{2}]$ is a
subsemigroup of $A$. Now $(A^{m}Q_{1}Q_{2}]$ is an m-left ideal.
Using result $(3)(a)$ and $(3)(b)$,
$(A^{m}Q_{1}Q_{2}]=((A^{m}Q_{1}Q_{2}](A^{m}Q_{1}Q_{2}]]
\subseteq((A^{m}Q_{1}Q_{2}](A^{m}Q_{1}Q_{2}]^{2}]
\subseteq(A^{m}Q_{1}Q_{2}A^{m}Q_{1}Q_{2}A^{m}Q_{1}Q_{2}] \subseteq
(A^{m}Q_{1}Q_{2}A^{m}AAA^{m}Q_{1}Q_{2}]\subseteq
(A^{m}Q_{1}Q_{2}A^{2m+2}\\Q_{1}Q_{2}] \subseteq
(A^{m}((Q_{1}Q_{2}A^{m}](A^{m}Q_{1}Q_{2}]]]$.

Similarly, $(Q_{1}Q_{2}A^{m}]\subseteq
(((Q_{1}Q_{2}A^{m}](Q_{1}Q_{2}A^{m}]]A^{m}]$.

Now $(A^{m}(Q_{1}Q_{2}]]\cap ((Q_{1}Q_{2}]A^{m}]\subseteq
(A^{m}Q_{1}Q_{2}]\cap (Q_{1}Q_{2}A^{m}]= (Q_{1}Q_{2}A^{m}]\cap
(A^{m}Q_{1}Q_{2}]\subseteq
(((Q_{1}Q_{2}A^{m}](Q_{1}Q_{2}A^{m}]]A^{m}]\cap
(A^{m}((Q_{1}Q_{2}A^{m}](A^{m}Q_{1}Q_{2}]]]\subseteq
((Q_{1}Q_{2}A^{m}](A^{m}Q_{1}Q_{2}]]\subseteq
(Q_{1}Q_{2}A^{m}A^{m}Q_{1}Q_{2}]\subseteq
(Q_{1}AA_{m}A^{m}Q_{1}Q_{2}]\subseteq
(Q_{1}A^{2m+1}Q_{1}Q_{2}]\subseteq (Q_{1}A^{m}Q_{1}Q_{2}]\subseteq
((Q_{1}A^{m}Q_{1}]Q_{2}]\subseteq ((Q_{1}]Q_{2}]\subseteq
(Q_{1}Q_{2}]$.

Hence $(Q_{1}Q_{2}]$ is an m-quasi-ideal of $A$. Since the
multiplication defined in $Q_{A}$ is associative, so $(Q_{A},\circ)$
is a semigroup. Now we have to show that $(Q_{A},\circ)$ is regular.
Using the conditions $(3)(a)$ and $(3)(b)$, we have $A^{2}=A$ and

$(A^{m}Q]=(A^{m}Q]^{2}=((A^{m}Q](A^{m}Q]]\subseteq
((A^{m}Q](A^{2m}Q]]\subseteq (A^{m}QA^{2m}Q]\subseteq
(A^{m}(QA^{m}](A^{m}Q]$. Similarly $(QA^{m}]\subseteq
((QA^{m}](A^{m}Q]A^{m}]$. therefore from relations $(i),(ii)$ it
follows that $Q=(A^{m}Q]\cap (QA^{m}]\subseteq
(A^{m}(QA^{m}]\\(A^{m}Q]\cap ((QA^{m}](A^{m}Q]A^{m}]\subseteq
((QA^{m}](A^{m}Q]]\subseteq (QA^{m}Q]\subseteq (Q]=Q$. Hence
$Q=(QA^{m}Q]=Q\circ A^{m}\circ Q$. This means that $(Q_{A},\circ)$
is a regular semigroup.

$(4)\Rightarrow (5)$: Let $Q$ be an m-quasi-ideal of $A$. By the
assumption $(4)$, there exists an m-quasi-ideal $P$ of $A$ such that
$Q=Q\circ P^{m}\circ Q=(QP^{m}Q]\subseteq (QA^{m}Q]\subseteq
(QA^{m}]\cap (A^{m}Q]\subseteq Q$ which infers that $Q=(QA^{m}Q]$.

$(5)\Rightarrow (1)$: Let $x\in A$ and $L(x)$ and $R(x)$ be the
principal m-left and principal m-right ideal of $A$ generated by
$x$. Now Lemma \ref{LR} infers that $R(x)\cap L(x)$ is an
m-quasi-ideal of $A$. Then by condition $(5)$, we have $x\in
R(x)\cap L(x)=((R(x)\cap L(x))A^{m}(R(x)\cap L(x))]\subseteq
(R(x)A^{m}L(x)]\subseteq ((x\cup xA^{m}]A^{m}(x\cup
A^{m}x]]\subseteq (xA^{m}x]$. Thus $A$ is m-regular.

\end{proof}

\begin{lemma}\label{regular ideal}
Every two sided m-ideal $J$ of an m-regular ordered semigroup $A$ is
an m-regular subsemigroup of $A$.
\end{lemma}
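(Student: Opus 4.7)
The plan is to show that every $a \in J$ satisfies $a \in (aJ^{m}a]$, replacing the ``witness'' from $A^{m}$ supplied by the $m$-regularity of $A$ by one that actually lies in $J^{m}$. Fix $a \in J$. Since $A$ is $m$-regular, choose $x \in A^{m}$ with $a \leq axa$. The strategy is to iterate this inequality enough times so that, after peeling off the two outer $a$'s, the remaining middle factor can be forced into $J^{m}$ using both the $m$-left and $m$-right ideal properties of $J$.

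First I would establish, by a short induction on $k$, that $a \leq (ax)^{k}a$ for every $k \geq 1$. The base case $k=1$ is the hypothesis $a \leq axa$, and the inductive step follows by substituting $a \leq axa$ into the trailing $a$:
\[
a \leq (ax)^{k} a \leq (ax)^{k}(axa) = (ax)^{k+1} a.
\]
This lets me pick the iteration count as large as I need.

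The key choice is $k = m+1$. Rewrite
\[
(ax)^{m+1}a \;=\; a \cdot \bigl[x(ax)^{m}\bigr] \cdot a,
\]
and set $y := x(ax)^{m}$. Since $J$ is an $m$-right ideal, $ax \in JA^{m} \subseteq J$, so $(ax)^{m} \in J^{m}$; since $J$ is an $m$-left ideal, $A^{m}J \subseteq J$, and hence
\[
y = x(ax)^{m} \in A^{m} \cdot J^{m} = (A^{m}J)\cdot J^{m-1} \subseteq J \cdot J^{m-1} = J^{m}.
\]
Thus $a \leq aya$ with $y \in J^{m}$, and because $aya \in J$, this gives $a \in (aJ^{m}a]_{J}$, proving that $J$ is $m$-regular.

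The only real obstacle is choosing the exponent correctly. If one iterates only $k \leq m$ times, the middle factor $x(ax)^{k-1}$ sits in $A^{m}J^{k-1}$ with fewer than $m$ copies of $J$, and there is no obvious way to recognise it as an element of $J^{m}$. Choosing $k = m+1$ provides exactly one extra $J$-factor, which the $m$-left-ideal property uses to absorb the leading $A^{m}$; once this bookkeeping is set up correctly, the two-sided $m$-ideal structure of $J$ does the rest of the work automatically.
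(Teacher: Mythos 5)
Your proof is correct, but it takes a more careful route than the paper's. The paper performs a single substitution: from $a\leq axa$ it gets $a\leq a(xax)a$ and observes $xax\in A^{m}JA^{m}\subseteq J$, concluding $a\in (aJa]$, i.e.\ that $a$ is \emph{regular} in $J$; m-regularity of $J$ then rests on the paper's earlier (unproved) remark that every regular ordered semigroup is m-regular. You instead iterate the inequality $m+1$ times to manufacture a middle factor $y=x(ax)^{m}$ lying in $J^{m}$ itself, using $JA^{m}\subseteq J$ to see $ax\in J$ and then $A^{m}J\subseteq J$ to absorb the leading $x$, so you obtain $a\in (aJ^{m}a]$ directly. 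This is genuinely stronger as a direct conclusion and closes the small gap the paper leaves open; the cost is the extra induction and the bookkeeping with the exponent $m+1$ (note only that your factorisation $A^{m}J^{m}=(A^{m}J)J^{m-1}$ degenerates when $m=1$, where one should just write $x(ax)\in A J\subseteq J=J^{1}$, as you implicitly do). Both arguments use the same two ingredients --- m-regularity of $A$ for the initial witness $x\in A^{m}$, and the two-sided m-ideal property of $J$ to pull the witness into $J$ --- so the difference is one of how far the substitution is pushed, not of underlying idea.
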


\begin{proof}
By Theorem \ref{LEFT-QUASI}, $J$ is an m-quasi-ideal of $A$. Let
$a\in J$. Since $A$ is m-regular, then there exists $x\in A^{m}$
such that $a\leq axa\leq a(xax)a$. Since $xax\in
A^{m}JA^{m}\subseteq J$. Hence we have $a\in (aJa]$. Thus $a$ is
regular in $J$.
\end{proof}

\begin{theorem}
Let $A$ be an m-regular ordered semigroup, then the following
assertions hold:
\begin{enumerate}

\item \vspace{-.4cm}
Every m-quasi ideal $Q$ of $A$ can be written in the form $Q=R\cap
L=(RL]$. where $R$ is m-right ideal and $L$ is m-left ideal of $A$.
\item \vspace{-.4cm}
If $Q$ is m-quasi-ideal of $A$, then $(Q^{2}]=(Q^{3}]$.

\item \vspace{-.4cm}
Every m-bi-ideal of $A$ is a m-quasi-ideal of $A$.

\item \vspace{-.4cm}
Every m-bi-ideal of any two-sided ideal of $A$ is an m-quasi-ideal
of $A$.

\end{enumerate}

\end{theorem}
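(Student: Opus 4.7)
My plan is to dispatch the four assertions in turn, leaning on the structural lemmas of this section together with the characterizations of m-regularity in Theorem \ref{m-regular}.

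For (1), Lemma \ref{Q=L(Q)R(Q)} already writes any m-quasi-ideal $Q$ as $L(Q)\cap R(Q)$, the intersection of its principal m-left and m-right ideals; taking $L=L(Q)$ and $R=R(Q)$ yields the intersection form, and the equality $Q=(RL]$ follows from the implication $(1)\Rightarrow(2)$ of Theorem \ref{m-regular}, which supplies $(RL]=R\cap L$ whenever $A$ is m-regular.

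For (2), $(Q^{3}]\subseteq(Q^{2}]$ holds trivially since $Q^{3}\subseteq Q^{2}$. For the reverse, take $z\leq q_{1}q_{2}$ with $q_{i}\in Q$; since $q_{1}q_{2}\in Q$ and $A$ is m-regular, we have $q_{1}q_{2}\leq (q_{1}q_{2})y(q_{1}q_{2})=q_{1}(q_{2}yq_{1})q_{2}$ for some $y\in A^{m}$, and Theorem \ref{bi-quasi} supplies $q_{2}yq_{1}\in QA^{m}Q\subseteq Q$; hence $z\in (Q^{3}]$.

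For (3), let $B$ be an m-bi-ideal of $A$ and pick $x\in(A^{m}B]\cap(BA^{m}]$, witnessed by $x\leq a_{1}b_{1}$ and $x\leq b_{2}a_{2}$ with $a_{i}\in A^{m}$, $b_{i}\in B$. M-regularity produces $y\in A^{m}$ with $x\leq xyx$, so substituting the outer occurrences gives $x\leq b_{2}(a_{2}ya_{1})b_{1}$. Since $a_{2}ya_{1}\in A^{3m}\subseteq A^{m}$, we obtain $x\in (BA^{m}B]\subseteq (B]=B$ by the m-bi-ideal property.

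The serious obstacle lies in (4). Let $J$ be a two-sided ideal of $A$ and $B$ an m-bi-ideal of $J$. The equality $(B]_{A}=B$ is easy: if $x\leq b\in B\subseteq J$, then $(J]_{A}=J$ places $x$ in $J$, so $x\in (B]_{J}=B$. The hard inclusion is $(A^{m}B]_{A}\cap(BA^{m}]_{A}\subseteq B$: a single use of m-regularity, as in (3), only places the middle in $A^{m}$, whereas $BJ^{m}B\subseteq B$ demands the middle in $J^{m}$. My plan is to iterate m-regularity to obtain $x\leq (xy)^{m+1}x$ for some $y\in A^{m}$ and then substitute $b_{2}a_{2}$ for the first $x$ and $a_{1}b_{1}$ for the last; each interior $x$ lies in $J$ because $x\leq a_{1}b_{1}\in A^{m}J\subseteq J$ and $(J]_{A}=J$. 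The middle becomes $a_{2}(yx)^{m}ya_{1}$, in which every block $yx$ belongs to $A^{m}J\subseteq J$. Regrouping as $(a_{2}\cdot yx)(yx)^{m-2}(yx\cdot ya_{1})$ for $m\geq 2$, and absorbing the leading $a_{2}$ and trailing $ya_{1}$ via $A^{m}J\subseteq J$ and $JA^{m}\subseteq J$, exhibits the middle as a product of $m$ elements of $J$; the case $m=1$ is handled directly by absorbing $a_{2},a_{1}$ into $yxy$. This places the middle in $J^{m}$, so $x\in (BJ^{m}B]_{A}\subseteq (B]_{A}=B$.
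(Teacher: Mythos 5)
Your proof is correct, but for parts (2)--(4) it follows a genuinely different, more elementary route than the paper. For (1) the two arguments coincide: Lemma \ref{Q=L(Q)R(Q)} combined with the implication $(1)\Rightarrow(2)$ of Theorem \ref{m-regular}. For (2) the paper first notes that $(Q^{2}]$ is itself an m-quasi-ideal and then invokes condition $(5)$ of Theorem \ref{m-regular} to get $(Q^{2}]=(Q^{2}A^{m}Q^{2}]\subseteq (Q^{3}]$; your element-wise application of m-regularity to the element $q_{1}q_{2}\in Q$, together with $QA^{m}Q\subseteq Q$ from Theorem \ref{bi-quasi}, reaches the same conclusion without needing to know that $(Q^{2}]$ is a quasi-ideal. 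For (3) the paper routes through the characterization $(RL]=R\cap L$ applied to the m-right ideal $(BA^{m}]$ and the m-left ideal $(A^{m}B]$, whereas you chase elements directly via $x\leq xyx\leq b_{2}(a_{2}ya_{1})b_{1}$; both are sound. The real divergence is (4): the paper shows $B$ is an m-bi-ideal of $A$ via $BA^{m}B\subseteq (BI]\cap (IB]\subseteq B$ and then appeals to part (3), but the inclusion $(BI]\cap (IB]\subseteq B$ is justified there only by $B$ being an m-quasi-ideal of $I$, which literally yields $(I^{m}B]\cap (BI^{m}]\subseteq B$ and requires an additional appeal to Lemma \ref{ASm} for the m-regular subsemigroup $I$ to upgrade $(IB]$ to $(I^{m+1}B]\subseteq (I^{m}B]$. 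Your iterated-regularity computation $x\leq (xy)^{m+1}x$, with the interior copies of $x$ placed in $J$ and the factors regrouped into a product of exactly $m$ elements of $J$, verifies the quasi-ideal condition for $B$ in $A$ directly and sidesteps that delicate step entirely; the price is heavier bookkeeping of the blocks, which you carry out correctly, including the separate treatment of $m=1$.
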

\begin{proof}
$(1)$: Since $Q$ is an m-quasi ideal of an m-regular ordered
semigroup $A$. Now by Lemma \ref{Q=L(Q)R(Q)} and Theorem
\ref{m-regular}, it follows that $Q=R\cap L=(RL]$.

$(2)$: $(Q^{3}]\subseteq (Q^{2}]$ always hold, we have to show that
$(Q^{2}]\subseteq (Q^{3}]$. By previous theorem \ref{m-regular},
$(Q^{2}]$ is an m-quasi-ideal of $A$ and in-addition,
$(Q^{2}]=(Q^{2}A^{m}Q^{2}]=(QQA^{m}QQ]=(Q(QA^{m}Q)Q]\subseteq
(QQQ]=(Q^{3}]$ which implies that $(Q^{2}]\subseteq (Q^{3}]$. Hence
$(Q^{2}]=(Q^{3}]$.

$(3)$: Let $B$ be an m-bi-ideal of $A$ then $(A^{m}B]$ is an m-left
ideal and $(BA^{m}]$ is an m-right ideal of $A$. By Theorem
\ref{m-regular}, $(BA^{m}]\cap (A^{m}B] =(BA^{m}A^{m}B]\subseteq
(BA^{2m}B]\subseteq (BA^{m}B]\subseteq (B]=B$. Hence $B$ is an
m-quasi-ideal of $A$.

$(4)$: Let $I$ be two sided m-ideal of $A$, and $B$ be an m-bi-ideal
of $I$. According to Lemma \ref{regular ideal}, $I$ is an m-regular
subsemigroup of $A$. By previous property $(3)$ $B$ is m-quasi-ideal
of $I$. Now $BA^{m}B\subseteq BA^{m}I$ and $BA^{m}B\subseteq
IA^{m}B$, so $BA^{m}B\subseteq BA^{m}I\cap IA^{m}B\subseteq BI\cap
IB\subseteq (BI]\cap (IB]\subseteq B$ that is $BA^{m}B\subseteq B$.
Hence $B$ is an m-bi-ideal of $A$. Therefore by previous condition
$(3)$, $B$ is an m-quasi-ideal of $A$.

\end{proof}

\bibliographystyle{amsplain}

\end{document}